\numberwithin{equation}{section}
\theoremstyle{plain}
\newtheorem{Proposition}[equation]{Proposition}
\newtheorem*{Corollary*}{Corollary}
\newtheorem{Theorem}[equation]{Theorem}
\newtheorem*{Theorem*}{Theorem}
\newtheorem{Lemma}[equation]{Lemma}
\theoremstyle{definition}
\newtheorem{Definition}[equation]{Definition}
\newtheorem{Example}[equation]{Example}
\newtheorem{Remark}[equation]{Remark}
\def\C{\mathbb{C}}
\def\R{\mathbb{R}}
\def\D{\mathbb{D}}
\def\T{\mathbb{T}}
\def\N{\mathbb{N}}
\def\phi{\varphi}
\def\M{\mathscr{M}}
\renewcommand{\leq}{\leqslant}
\renewcommand{\geq}{\geqslant}
\renewcommand{\subset}{\subseteq}
\begin{document}
%
%
%
%
%
%
%
%
%
%

\title{Inner functions and zero sets for $\ell^{p}_{A}$}

\author[Cheng]{Raymond Cheng}
\address{Department of Mathematics and Statistics,
  Old Dominion University,
  Norfolk, VA 23529}
  \email{rcheng@odu.edu}

\author[Mashreghi]{Javad Mashreghi}
\address{D\'epartement de math\'ematiques et de statistique, Universit\'e laval, Qu\'ebec, QC, Canada, G1V 0A6}
\email{javad.mashreghi@mat.ulaval.ca}

\author[Ross]{William T. Ross}
	\address{Department of Mathematics and Computer Science, University of Richmond, Richmond, VA 23173, USA}
	\email{wross@richmond.edu}

\begin{abstract}
In this paper we characterize the zero sets of functions from $\ell^{p}_{A}$ (the analytic functions on the open unit disk $\D$ whose Taylor coefficients form an $\ell^p$ sequence) by developing a concept of an ``inner function'' modeled by Beurling's discussion of the Hilbert space $\ell^{2}_{A}$, the classical Hardy space.  The zero set criterion is used to construct families of zero sets which are not covered by classical results.  In particular, we give an alternative proof of a result of Vinogradov \cite{Vinogradov} which says that when $p > 2$, there are zero sets for $\ell^{p}_{A}$ which are not Blaschke sequences.  
 \end{abstract}



\thanks{This work was supported by NSERC (Canada).}

\maketitle


\section{Introduction}

For $p  \in (1, \infty)$ let 
$$\ell^{p}_{A} := \left\{f(z) = \sum_{k \geq 0} a_k z^k: \ \sum_{k \geq 0} |a_k|^p < \infty\right\}.$$
When endowed with the norm 
$$\|f\|_{p} := \left(\sum_{k \geq 0} |a_k|^p\right)^{1/p},$$
$\ell^{p}_{A}$ is a Banach space of analytic functions on the open unit disk $\D = \{z: |z| < 1\}$. Notice how we are identifying, via Taylor coefficients, i.e., 
$$\sum_{k \geq 0} a_k z^k \leftrightarrow (a_{k})_{k \geq 0},$$ the function space $\ell^{p}_{A}$ with the classical sequence space 
$$\ell^{p} = \left\{(a_k)_{k \geq 0}: \sum_{k \geq 0} |a_k|^p < \infty\right\}.$$ The Hausdorff-Young inequalities show that 
\begin{equation}\label{HYE}
 \ell^{p}_{A} \subset H^{p'}, \quad p \in (1, 2]; \qquad H^{p'} \subset \ell^{p}_{A}, \quad p \in [2, \infty).
\end{equation}
Here $p'$ denotes the H\"{o}lder conjugate index to $p$, i.e., $1/p + 1/p' = 1$, and for $s \in (0, \infty)$, $H^{s}$ denotes the standard Hardy space of analytic functions $f$ on $\D$ for which 
$$\sup_{0 < r < 1} \int_{0}^{2 \pi}  |f(r e^{i \theta})|^{s} \frac{d \theta}{2 \pi} < \infty.$$  For more on the Hardy spaces, including the above-mentioned Hausdorff-Young inequalities, see \cite{Dur, MR2500010}. 

The Hardy spaces are well-understood spaces of analytic functions. Indeed, their zero sets, their boundary behavior, their multipliers, and their invariant subspaces under the shift operator $S f = z f$ are well-known and considered part of the classical complex analysis literature.  Similar topics have been explored for other related Banach spaces of analytic functions such as the Bergman spaces \cite{Duren-Bergman} and the Dirichlet spaces \cite{D-book}.   By comparison, relatively little is known about $\ell^{p}_{A}$ spaces \cite{AB, MR3714456} when $p > 1$. The Wiener algebra $\ell^{1}_{A}$ is somewhat better understood but there is still much work to be done \cite{MR973315}. In this paper we focus on the zero sets for $\ell^{p}_{A}$.

To speak of the ``zero set'' of a nontrivial, i.e., not identically zero, analytic function in $\mathbb{D}$, it is necessary to take account the multiplicities of the zeros.  With that in mind, we could consider systems
\[
     W = \{(s_1, s_2, s_3,\ldots), (n_1, n_2, n_3,\ldots)\},
\] 
where $s_1, s_2, s_3,\ldots$ are {\em distinct} points of $\mathbb{D}$, and $n_1, n_2, n_3,\ldots$ are positive integers.  For each $k\geq 1$, we would think of $s_k$ as a zero of multiplicity $n_k$.   
Equivalently, as we will do in this paper, we could work with finite or infinite sequences $W = (w_1, w_2, w_3,\ldots)$ of points in $\mathbb{D}$, with entries repeated in accordance with the multiplicity of the zero $w_k$.  In either case, let us define 
\begin{equation}\label{RW}
     \mathscr{R}_W := \{f \in \ell^{p}_{A}: f(w_k) = 0, \; \;  k \geq 1\},
\end{equation}
where in the above, if the zero $w_k$ is repeated, then we include the requirement that the requisite number of derivatives of $f$ vanish at $w_k$.  Observe that an $f \in \mathscr{R}_{W}$ might have other zeros besides $w_k$, or perhaps zeros of higher multiplicity than the multiplicity of $w_k$. Since convergence of a sequence of functions in the norm of $\ell^{p}_{A}$ implies uniform convergence on compact subsets of $\D$, via  \eqref{oeiwjfdkv} below, $\mathscr{R}_W$ is a closed subspace of $\ell^p_A$.

\begin{Definition} We say that a sequence  $W = (w_{k})_{k \geq 1} \subset \D$ is a {\em zero set} for $\ell^{p}_{A}$ if $\mathscr{R}_W \not = \{0\}$.
\end{Definition}

When $p = 2$, notice, via Parseval's theorem and radial boundary values, that $\ell^{2}_{A}$ is the classical Hardy space $H^2$. Furthermore, the zero sets for $\ell^{2}_{A} = H^2$ are well understood. 
Indeed, a sequence $(w_k)_{k \geq 1} \subset \mathbb{D} \setminus \{0\}$ is a zero set for $\ell^{2}_{A}$ if and only if 
$$\prod_{k \geq 1} \frac{1}{|w_k|} < \infty,$$  or equivalently,
\begin{equation}\label{024ouriehtrjgeqw}
\sum_{k \geq 1} (1 - |w_k|) < \infty.
\end{equation}
This last summability condition is known as the {\em Blaschke condition} and
such sequences $(w_k)_{k \geq 1}$ are known as {\em Blaschke sequences}.  One example of an $f \in \ell^{2}_{A} \setminus \{0\}$ vanishing on a Blaschke sequence $(w_k)_{k \geq 1}$, along with the desired multiplicities, is the Blaschke product 
\begin{equation}\label{wieurhgkfjldbg}
f(z) = \prod_{k \geq 1} \frac{|w_k|}{w_k} \frac{w_k - z}{1 - \overline{w_k} z}.
\end{equation}
Note that this Blaschke product is a bounded analytic function on $\D$.

Though the description of the zero sets for $\ell^{p}_{A}$ for $p \not = 2$ does not seem to have such a nice summability condition as with $p = 2$, there are a few things we can say about the zero sets for $\ell_{A}^{p}$. When $p \in (1, 2]$, the Hausdorff-Young inequalities from \eqref{HYE}  show that $\ell^{p}_{A} \subset H^{p'}$. Consequently, since the zeros of functions in the Hardy spaces must be Blaschke sequences, the zero set for any $f\in\ell^p_A \setminus \{0\}$ must be a Blaschke sequence. But this is only a necessary condition. 
For an example of a sufficient condition, a result of Newman and Shapiro \cite{MR0148874} shows that if the sequence $(w_k)_{k \geq 1} \subset \D$ converges exponentially to the unit circle $\T$, i.e., 
\begin{equation}\label{pppsdofposfpo11}
1 - |w_{k + 1}| < c (1 - |w_{k}|)
\end{equation} for some $c \in (0, 1)$, then the Blaschke product $B$ with precisely these zeros has Fourier coefficients $\widehat{B}(n)$ which satisfy $|\widehat{B}(n)| = O(1/n)$ and so $B \in \ell^{p}_{A}$. It is also known \cite[Ch.~III]{MR947146} that if the sequence $(w_k)_{k \geq 1}$ accumulates on a sufficiently thin subset of $\T$, then this sequence is the zero set of an analytic function on $\D$ which has an infinitely differentiable continuation to the closure of $\D$. Such functions have rapidly decaying Taylor coefficients and thus belong to $\ell^{p}_{A}$. Thus these types of ``thin'' sequences are also zero sets for $\ell^{p}_{A}$.  In the last several sections of this paper, we will construct further examples of zero sets of $\ell^p_A$. One of the results in this regard is the following.  Below,  $r'$ denotes the H\"{o}lder conjugate index of $r$.

\begin{Theorem}\label{blaslikeex}
   Fix $p \in (1, \infty)$ and let $W = (w_1, w_2, w_3,\ldots) \subset \mathbb{D} \setminus \{0\}$.  
Choose numbers $r_1, r_2, r_3,\ldots$ with  $r_k >1$ and 
\[
       \sum_{k \geq 1} \Big(1 - \frac{1}{r_k}\Big) < \frac{1}{p'}.
\]
If the sequence $W$ satisfies
$$
     \sum_{k \geq 1}  (1 - |w_k|^{r'_{k}})^{r_k-1} < \infty,
$$
then $W$ is a zero set for $\ell^p_A$.
\end{Theorem}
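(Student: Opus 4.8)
The plan is to exhibit an explicit nonzero element of $\mathscr{R}_W$, constructed as a convergent infinite product of elementary factors, one attached to each $w_k$, each vanishing at $w_k$ to the required order and normalized to take the value $1$ at the origin. For each $k$ I would take an atom $A_k(z) = 1 + g_k(z)$, where $g_k$ is a power series with $g_k(0) = 0$ chosen so that $A_k(w_k) = 0$ (and, if $w_k$ is a repeated entry of $W$, so that the appropriate derivatives vanish as well). A convenient choice is the normalized Blaschke factor $\frac{1 - z/w_k}{1 - \overline{w_k} z}$; a more economical one is the minimal-$\ell^{r_k}_A$-norm solution of the interpolation constraint $g_k(w_k) = -1$, $g_k(0) = 0$, which by H\"older duality has $\ell^{r_k}_A$-norm equal to $1/\|(w_k^n)_{n \ge 1}\|_{r'_k} = (1 - |w_k|^{r'_k})^{1/r'_k}/|w_k|$. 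The essential point of introducing the exponents $r_k$ is that each atom is to be measured in $\ell^{r_k}_A$ rather than in $\ell^p_A$.

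The core of the argument is a norm estimate for the partial products $P_N = \prod_{k=1}^N A_k$. Writing $P_N = P_{N-1} + P_{N-1} * g_N$ (convolution of Taylor coefficients) and estimating the convolution via Young's inequality, I would run an induction in which $P_N$ is controlled in a space $\ell^{q_N}_A$ whose exponent increases toward $p$ according to $1/q_N = 1/q_{N-1} - (1 - 1/r_N)$, starting from $q_0 = 1$. The budget hypothesis $\sum_k (1 - 1/r_k) < 1/p'$ guarantees $1/q_N > 1/p$ for every $N$, so that $\ell^{q_N}_A \subset \ell^p_A$ and the product never leaves $\ell^p_A$; this is precisely the role of the constraint on the $r_k$. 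The induction yields a bound of the shape $\|P_N\|_p \le \prod_{k=1}^N (1 + C_k \|g_k\|_{r_k})$, which reduces the whole theorem to the convergence of the product of per-factor contributions. A single-factor coefficient computation then identifies these contributions with $(1 - |w_k|^{r'_k})^{r_k - 1}$, and the hypothesis $\sum_k (1 - |w_k|^{r'_k})^{r_k-1} < \infty$ forces $P_N$ to converge in $\ell^p_A$ to a limit $f$; since every factor equals $1$ at the origin, $f(0) = 1 \neq 0$, so $f \in \mathscr{R}_W \setminus \{0\}$ and $W$ is a zero set.

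I expect the main obstacle to be this product norm estimate, and specifically getting the correct power of the per-factor size to appear. The single-factor computation gives $\|g_k\|_{r_k} \asymp (1 - |w_k|^{r'_k})^{1/r'_k}$, so that $\|g_k\|_{r_k}^{r_k} \asymp (1 - |w_k|^{r'_k})^{r_k - 1}$ is exactly the summand in the hypothesis; thus the theorem is true precisely when the $k$-th factor contributes at the level of the $r_k$-th power $\|g_k\|_{r_k}^{r_k}$. A naive term-by-term bound (triangle inequality together with Young's inequality) instead produces the first power $\|g_k\|_{r_k}$, whose summability $\sum_k (1 - |w_k|^{r'_k})^{1/r'_k} < \infty$ is strictly stronger, since $r_k - 1 > 1/r'_k$. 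Recovering the sharp $r_k$-th power---presumably by estimating $\|P_N\|_p^p$ directly and absorbing the higher-order convolution terms, rather than bounding them individually---while keeping the exponent bookkeeping consistent with the budget $\sum_k (1 - 1/r_k) < 1/p'$, is the delicate part. The remaining ingredients, namely the single-factor coefficient estimates and the passage to the limit (via the uniform $\ell^p_A$ bound, using that $\ell^p_A$-convergence implies uniform convergence on compact subsets of $\D$), are routine.
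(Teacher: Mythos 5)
Your construction is, in outline, the paper's own: your ``economical'' atom (the minimal $\ell^{r_k}_A$-norm interpolant) is exactly the one-zero $r_k$-inner factor $B_{w_k,r_k}(z)=\frac{1-z/w_k}{1-w_k^{\langle r_k'-1\rangle}z}$ of \eqref{uuujujujuj}, and your exponent bookkeeping $1/q_N=1/q_{N-1}-(1-1/r_N)$ is the same repeated application of Young's convolution inequality under the budget $\sum_k(1-1/r_k)<1/p'$ (the paper peels factors off the other end, with exponents decreasing from $p$ toward $p^*>1$; that difference is cosmetic). However, the step you yourself single out as ``the delicate part''---making the $k$-th factor contribute at the level of $\|g_k\|_{r_k}^{r_k}$ rather than $\|g_k\|_{r_k}$---is a genuine gap, and the repair you gesture at (expanding $\|P_N\|_p^p$ and absorbing higher-order convolution terms) is not the mechanism that works. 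The resolution is simpler and you already have every ingredient: do not split $P_N=P_{N-1}+P_{N-1}*g_N$ and use the triangle inequality; apply Young to the \emph{whole} factor, $\|P_{N-1}*A_N\|_{q_N}\le\|P_{N-1}\|_{q_{N-1}}\,\|A_N\|_{r_N}$. Because the constant term $1$ and $g_N$ occupy disjoint sets of Taylor coefficients, one has the exact identity $\|A_N\|_{r_N}^{r_N}=1+\|g_N\|_{r_N}^{r_N}$: the $\ell^{r_N}_A$-norm of the full atom already ``sees'' the $r_N$-th power of $\|g_N\|_{r_N}$. Iterating gives $\|P_N\|_p\le\|P_N\|_{q_N}\le\prod_{k=1}^N\bigl(1+\|g_k\|_{r_k}^{r_k}\bigr)^{1/r_k}$, and since $\|g_k\|_{r_k}^{r_k}=(1-|w_k|^{r_k'})^{r_k-1}/|w_k|^{r_k}$ (note $r_k/r_k'=r_k-1$), your hypothesis bounds this product independently of $N$. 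This is precisely the paper's computation $\|B_{w,r}\|_{r}^{r}=1+(1-|w|^{r'})^{r-1}/|w|^{r}$ followed by its chain of Young inequalities in \eqref{laststepinyoungeq}.

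There is a second, smaller gap in your passage to the limit: your telescoping increments are controlled only by $\|g_N\|_{r_N}\asymp(1-|w_N|^{r_N'})^{1/r_N'}$, which need not be summable under the hypothesis (the same first-power versus $r_k$-th-power discrepancy), so norm convergence of $P_N$ does not follow as you state it. Two ways out: (i) the paper's route---use the extremal property of the metric co-projection to get $\|J_n\|_p\le\|P_n\|_p$ and invoke Theorem \ref{italiansubthm}, which supplies all the limiting machinery; or (ii) keep your direct construction, observe that eventually $r_k-1\le1$, so the hypothesis forces $\sum_k(1-|w_k|^{r_k'})<\infty$ and hence the infinite product converges locally uniformly on $\D$ to a nonzero analytic $f$ vanishing on $W$, then conclude $f\in\ell^p_A$ by a coefficient-wise Fatou argument (the Taylor coefficients $c_n$ of $f$ are limits of those of $P_N$, so $\sum_{n=0}^{K}|c_n|^p\le\liminf_N\|P_N\|_p^p$ for every $K$). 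With either repair, and with the disjoint-support identity above, your argument is a complete proof; without that identity it establishes only the weaker theorem whose hypothesis is $\sum_k(1-|w_k|^{r_k'})^{1/r_k'}<\infty$.
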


It is still an open question as to whether {\em every} Blaschke sequence is a zero set for $\ell^{p}_{A}$. 

When $p \in (2, \infty)$, the Hausdorff-Young inequalities from \eqref{HYE} yield the containment $H^{p'} \subset \ell^{p}_{A}$, which means that any Blaschke sequence is a zero set for $\ell^{p}_{A}$. Furthermore, a result of Vinogradov \cite{Vinogradov} shows that when $p > 2$ the zero sequences for $(w_k)_{k \geq 1}$ must satisfy the condition 
$$\sum_{k \geq 1} (1 - |w_k|)^{1 + \epsilon} < \infty$$
for any $\epsilon > 0$, which is somewhat weaker than the Blaschke condition. 

On the other hand, is it possible for certain non-Blaschke sequences to be zero sets for $\ell^{p}_{A}$? One might suspect the answer is yes due to the fact that when $p > 2$, functions in $\ell^{p}_{A}$  can be poorly behaved near $\T$ \cite[Cor.~5.2]{MR3714456} in that there are functions in $\ell^{p}_{A}$ which do not possess radial limits almost everywhere on $\T$.  This is contrast to the Hardy space case where there is an expected radial, even non-tangential, regularity near $\T$. All this leads to the suspicion that some of the zero sets for $\ell^{p}_{A}$, $p > 2$, might be non-Blaschke.  As shown by Vinogradov \cite{Vinogradov}, this is indeed the case. As a consequence of the main result of this paper (see Theorem \ref{italiansubthm} below), using a a notion of ``inner function'' for $\ell^{p}_{A}$, we will give a new proof of this fact. 

\begin{Theorem}\label{nbcxuywrgth}
For each $p > 2$, there exists a non-Blaschke sequence $(w_k)_{k \geq 1} \subset \D$, i.e., 
$$\sum_{k \geq 1} (1 - |w_k|) = \infty$$ that is a zero sequence for $\ell^{p}_{A}$. 
\end{Theorem}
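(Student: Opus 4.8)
The plan is to reduce the statement to the construction of a single nonzero function, and then to identify by duality exactly which estimate must be proved. Since we only need $\mathscr{R}_W \neq \{0\}$, it suffices to produce one $f \in \ell^p_A \setminus \{0\}$ vanishing on a prescribed non-Blaschke sequence $W$, and I will arrange in addition that $f(0) \neq 0$. I would fix $W = (w_k)_{k \geq 1} \subset \D \setminus \{0\}$ accumulating only at $z = 1$, distributed in dyadic shells $1 - 2^{-n} \le |w| < 1 - 2^{-n-1}$ with $N_n$ points in the $n$-th shell, chosen so that $\sum_n N_n 2^{-n} = \infty$ (so that $W$ violates \eqref{024ouriehtrjgeqw}) while $N_n$ grows only as fast as the later estimate permits. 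For each $N$ let $F_N$ solve the extremal problem
\[
   m_N := \min\{ \|f\|_p : f \in \ell^p_A,\ f(0) = 1,\ f(w_1) = \cdots = f(w_N) = 0 \};
\]
the minimum is attained (and $F_N$ is essentially the $\ell^p_A$-inner function for $\{w_1, \dots, w_N\}$ normalized at the origin, in the sense of the main theorem, Theorem \ref{italiansubthm}) because $\ell^p_A$ is reflexive for $1 < p < \infty$ and the constraint set is a nonempty closed convex affine subspace. The numbers $m_N$ increase with $N$. Granting the uniform bound $\sup_N m_N < \infty$, the sequence $(F_N)$ is bounded in the reflexive space $\ell^p_A$, so a subsequence converges weakly to some $f \in \ell^p_A$. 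Each point evaluation $g \mapsto g(\zeta)$, $\zeta \in \D$, is a bounded, hence weakly continuous, functional on $\ell^p_A$ by \eqref{oeiwjfdkv}; therefore $f(0) = 1$, so $f \not\equiv 0$, and $f(w_k) = \lim_N F_N(w_k) = 0$ for every $k$. Thus $W$ is a zero set, and the whole theorem reduces to the uniform bound.

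Everything hinges on $\sup_N m_N < \infty$. Writing $(\ell^p_A)^\ast = \ell^{p'}_A$ under the bilinear pairing $\langle \sum a_k z^k, \sum b_k z^k\rangle = \sum a_k b_k$, the evaluation $\delta_w$ corresponds to the Cauchy kernel $k_w(z) = (1 - w z)^{-1} \in \ell^{p'}_A$. Hahn--Banach duality for the distance from an affine subspace then gives
\[
   \frac{1}{m_N} = \min_{c_1, \dots, c_N \in \C}\ \Big\| 1 + \sum_{j=1}^N c_j\, k_{w_j}\Big\|_{p'}.
\]
Hence $\sup_N m_N < \infty$ is equivalent to the assertion that the constant function $1$ stays at a positive distance, uniformly in $N$, from $\operatorname{span}\{k_{w_1}, \dots, k_{w_N}\}$ in $\ell^{p'}_A$, i.e. $1 \notin \overline{\operatorname{span}}\{k_{w_j} : j \geq 1\}$ in $\ell^{p'}_A$. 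It is decisive here that $p' < 2$: when $p' = 2$ (the case $p = 2$, where $\ell^2_A = H^2$) this distance is $0$ for every non-Blaschke $W$, since $\overline{\operatorname{span}}\{k_{w_j}\} = (\mathscr{R}^{H^2}_W)^{\perp} = H^2$ there; this is exactly why $H^2$ has no non-Blaschke zero sets, and it shows the argument must genuinely exploit the strict inequality $p' < 2$.

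The crux is therefore the lower bound
\[
   \Big\| 1 + \sum_{j=1}^N c_j\, k_{w_j}\Big\|_{p'}^{p'} = \Big|1 + \sum_{j=1}^N c_j\Big|^{p'} + \sum_{k \geq 1}\Big|\sum_{j=1}^N c_j w_j^{\,k}\Big|^{p'} \ \geq\ \eta^{p'} > 0,
\]
valid for all $N$ and all $c_1,\dots,c_N$, with $\eta$ independent of $N$. This is the step at which the shell counts $(N_n)$ and the rate $w_j \to 1$ get tuned: I would show that no finite combination of the kernels $k_{w_j}$ can approximate $1$ in the $\ell^{p'}$ coefficient norm, using the superadditive behavior of $t \mapsto t^{p'}$ available only for $p' < 2$ together with the slow, spread-out accumulation of the $w_j$. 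I expect this quantitative lower bound to be the hard part: it is a genuine $\ell^{p'}$ (not $L^2$) estimate on sums of Cauchy kernels whose poles pile up non-Blaschke-fast at $z=1$, and the permissible growth of $N_n$ — how far past the Blaschke threshold one may push while keeping $\eta$ bounded below — is dictated precisely by how strong this estimate can be made. By comparison, the remaining ingredients (attainment of $F_N$, the duality identity, and the weak-limit extraction) are routine.
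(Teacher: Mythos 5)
Your framework is sound as far as it goes: the extremal problems $m_N$ are attained and monotone, the weak-compactness extraction of a limit function with $f(0)=1$, $f|_W=0$ is a correct (and in fact cleaner) substitute for the norm-convergence machinery of Theorem \ref{italiansubthm}, and the duality identity $1/m_N=\min_{c}\bigl\|1+\sum_{j=1}^N c_j k_{w_j}\bigr\|_{p'}$ is verified correctly. But the proof stops exactly where the theorem begins. You never specify a non-Blaschke sequence (the shell counts $N_n$ and radii are left "to be tuned"), and you never prove the uniform lower bound $\eta>0$; you explicitly defer it as "the hard part." Since everything preceding it is an equivalence — $\sup_N m_N<\infty$ iff $W$ is a zero set, iff $1\notin\overline{\operatorname{span}}\{k_{w_j}\}$ in $\ell^{p'}_A$ — the entire content of Vinogradov's theorem lies in the step you omit. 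Moreover, the one technical mechanism you gesture at is wrong as stated: superadditivity $(a+b)^{s}\geq a^{s}+b^{s}$ for $a,b\geq 0$ holds for every $s\geq 1$, not "only for $p'<2$," so it cannot by itself be what separates $p>2$ from $p=2$; some genuinely quantitative input about the configuration of the $w_j$ is required, and none is supplied.

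For comparison, the paper never touches the dual problem; it works on the primal side by exhibiting explicit test functions. The $k$th test polynomial $F_k$ is a product of factors of the form $1-\frac{1}{j}\bigl[(z/r_j)^{j!}+(z/r_j)^{2\cdot j!}+\cdots+(z/r_j)^{j\cdot j!}\bigr]$, whose targeted roots are $r_j$ times all $j!$-th roots of unity. The factorial exponents are arranged (via $1+2\cdot 2!+\cdots+k\cdot k!+1=(k+1)!$) so that multiplying out the product produces each power of $z$ at most once; hence no cancellation or collection of terms is needed and
\[
\|F_k\|_p^p\ \leq\ \prod_{j=1}^{k}\Bigl(1+\frac{1}{j^{p-1}\,r_j^{\,jp\cdot j!}}\Bigr),
\]
which stays bounded upon choosing $r_j=\bigl(j^{-(p-2-\alpha)}\bigr)^{1/(jp\cdot j!)}$ with $0<\alpha<p-2$ — this is precisely where $p>2$ is used. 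Since there are $j!$ targeted roots of modulus $r_j$ and $j!\,(1-r_j)\sim (p-2-\alpha)\frac{\log j}{jp}$, the Blaschke sum diverges; and since $F_k(0)=1$ and $F_k$ vanishes on the targeted roots, the extremal property gives $\|J_k\|_p\leq\|F_k\|_p$, so Theorem \ref{italiansubthm} applies. If you wish to salvage your dual route, you would need to prove, for some concrete non-Blaschke $W$, a lower bound on the $\ell^{p'}_A$-distance from $1$ to spans of Cauchy kernels — a bound that provably fails when $p'=2$ and for which you offer no mechanism; the primal test-function construction is exactly how the paper (and Vinogradov) sidestep that difficulty.
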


The zero sequence created in the theorem above will be wildly distributed near the circle $\T$ since, as is well known by a result of Shapiro and Shields \cite{MR0145082}, that if the zeros of a nontrivial  $\ell^{p}_{A}$ function all lie on a single radius, or more generally in a single non-tangential approach region, then they must form a Blaschke sequence. Also noteworthy here is the fact that when $p > 2$, $\ell^{p}_{A}$ is considered a ``large'' space of analytic functions on $\D$. Another ``large'' space, the Bergman space $A^{p}$, also has the property that their zero sets need not always be Blaschke sequences \cite{Duren-Bergman}. We also point out here that the function from Theorem \ref{nbcxuywrgth} vanishing on this non-Blaschke sequence will be an ``inner function'' for $\ell^{p}_{A}$. 

\noindent {\bf Acknowledgement:} We wish to thank the referee for their careful reading of our paper and their useful comments.

\section{Our approach}

The main tools we use to explore the zero sets for $\ell^{p}_{A}$ are geometric ideas, including a notion of orthogonality on normed linear spaces developed by Birkhoff and James \cite{AMW, Jam}. This notion of orthogonality was also used  in some recent papers \cite{CR, CR2, MR3686895}.  We use this idea to define a sort of ``inner function'' for $\ell^p_A$  following an approach first exploited by Beurling when examining the shift invariant subspaces of  the Hardy space $\ell^{2}_{A}$.

With our notation and background to be explained in greater detail in the next two sections, we will say that $J \in \ell^{p}_{A} \setminus \{0\}$ is a {\em $p$-inner function} if 
$$J \perp_{p} S^{n} J, \quad n \in \N,$$
where $\perp_p$ is the notion of orthogonality on $\ell^{p}_{A}$ developed by Birkhoff and James and will be explained below in \eqref{2837eiywufh[wpofjk}.
As usual, 
$$S: \ell^{p}_{A} \to \ell^{p}_{A}, \quad (S f)(z) = z f(z),$$ is the unilateral shift operator, which is an isometry on $\ell^{p}_{A}$.  
When $p = 2$, we can use Parseval's theorem to see that $J \in \ell^{2}_{A} \setminus \{0\}$ is $2$-inner when the radial boundary function for $J$, i.e., 
\begin{equation}\label{radial_BF}
J(e^{i \theta}) := \lim_{r \to 1^{-}} J(r e^{i \theta}),
\end{equation}
which exists almost everywhere and defines a function in $L^2(d \theta/2 \pi)$ \cite{Dur}, satisfies the integral condition 
$$\int_{0}^{2 \pi} |J(e^{i \theta})|^2 e^{-i n \theta} \frac{d \theta}{2 \pi} = 0, \quad n \in \N.$$
The identity in the previous line, along with the same identity but taking complex conjugates, shows that a nontrivial function $J$ is $2$-inner precisely when the radial boundary function for $J$ has constant, and nonzero, modulus almost everywhere on the unit circle $\T$; that is, apart from a multiplicative constant, $J$ is inner in the traditional sense.

For $f \in \ell^{p}_{A} \setminus \{0\}$, let $\widehat{f}$ be the metric projection of $f$ onto the subspace
$$[S f] := \bigvee\{ S^{k} f: k \geq 1\},$$
i.e., the closest point in $[S f]$ to $f$ (which exists and is unique since $\ell^{p}_{A}$ is a uniformly convex Banach space \cite{Clark}). Our first observation is that 
$$J = f - \widehat{f}$$ defines a $p$-inner function and indeed all $p$-inner functions arise in this manner (see Proposition \ref{poiutorgorrrefdxxx} below). Note that when $p = 2$, $\ell^{2}_{A}$ is a Hilbert space and the above metric projection $\widehat{f}$ is the orthogonal projection of $f$ onto $[S f]$. Beurling used this co-projection analysis to describe the $S$-invariant spaces of $\ell^{2}_{A}$. It is worth pointing out that the invariant subspaces of $\ell^{p}_{A}$ when $p \not = 2$ can be quite complicated. Indeed, as was shown in \cite{AB}, the $\ell^{p}_{A}$ spaces, for $p > 2$, have the so-called {\em co-dimension $n$ property} in that given $n \in \N \cup \{\infty\}$ there is an $S$-invariant subspace $\M$ of $\ell^{p}_{A}$ for which $\operatorname{dim}(\M/S \M) = n$. This is in sharp contrast to Beurling's theorem which says that $\operatorname{dim}(\M/S\M) = 1$ for any non-trivial $S$-invariant subspace $\M$ of $\ell^{2}_{A} = H^2$. This co-dimension $n$ phenomenon appears in the Bergman space setting as well \cite{Duren-Bergman}.

We  mention that an analogous notion of ``inner'' for the Bergman space, along with some associated extremal conditions, was explored in \cite{MR1440934, MR1278431, MR1398090, MR1197044}, while the corresponding notions for the Dirichlet space was explored in \cite{MR936999, Seco}. In these settings there is also a notion of ``outer'' along with a corresponding ``inner-outer factorization'' which mirrors the classical Nevanlinna factorization in the Hardy spaces \cite{Dur}. To a somewhat weaker sense, there is a notion of inner-outer factorization in $\ell^{p}_{A}$ \cite{CR2}.

We can now state the main theorem of the paper which uses our concept of $p$-inner functions to describe the zero sets for $\ell^{p}_{A}$.

\begin{Theorem}\label{italiansubthm}
   Let $p \in (1, \infty)$ and suppose that $W = (w_1, w_2,\ldots) \subset \mathbb{D}\setminus \{0\}$.
\  Define, for each $n \in \N$,
 \begin{equation}\label{9384yrteiugfdsa}
        f_n(z) := \Big(1 - \frac{z}{w_1}  \Big)\Big(1 - \frac{z}{w_2}  \Big)\cdots \Big(1 - \frac{z}{w_n}  \Big)
 \end{equation}
and 
$$J_n := f_n - \widehat{f_n}.$$
Then 
\begin{enumerate}
\item $J_n$ is $p$-inner;
\item $\|J_n\|_p$ is monotone increasing with $n$; 
\item $W$ is a zero set for $\ell^{p}_{A}$ if and only if 
$$\sup_{n \geq 1} \|J_{n}\|_{p} < \infty.$$
In this case,
$J_{n}$ converges in the norm of $\ell^{p}_{A}$ to a $p$-inner function $J \in \ell^{p}_{A} \setminus \{0\}$ such that $J\in\mathscr{R}_W$.
\end{enumerate}
\end{Theorem}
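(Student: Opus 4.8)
The plan is to reduce all three parts to a single extremal reformulation of $\|J_n\|_p$. Write $W_n=(w_1,\dots,w_n)$ for the truncated sequence and let $\mathscr{R}_{W_n}$ be the corresponding subspace as in \eqref{RW}. The central first step is to identify the relevant invariant subspaces. Since $f_n$ is a polynomial of degree $n$ with $f_n(0)=1$, polynomial division gives $\mathbb{C}[z]=f_n\cdot\mathbb{C}[z]+\mathcal{P}_{n-1}$, where $\mathcal{P}_{n-1}$ is the ($n$-dimensional) space of polynomials of degree $\le n-1$; as polynomials are dense in $\ell^p_A$ and $f_n\cdot\mathbb{C}[z]\subset[f_n]$, this forces $\ell^p_A=[f_n]+\mathcal{P}_{n-1}$, so $[f_n]$ has codimension at most $n$. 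On the other hand $[f_n]\subset\mathscr{R}_{W_n}$ (point and derivative evaluations at the $w_k$ are continuous, so the vanishing conditions persist under norm limits), and $\mathscr{R}_{W_n}$ has codimension exactly $n$. Therefore $[f_n]=\mathscr{R}_{W_n}$, and applying the isometry $S$ together with the fact that $g\in\mathscr{R}_{W_n}$ with $g(0)=0$ has $g/z\in\mathscr{R}_{W_n}$ (as each $w_k\ne0$) yields the key identity $[Sf_n]=S[f_n]=\{g\in\mathscr{R}_{W_n}:g(0)=0\}$.

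With this in hand I would prove the extremal formula
\[
   \|J_n\|_p=\min\{\|g\|_p: g\in\mathscr{R}_{W_n},\ g(0)=1\}. \tag{$\star$}
\]
Since $J_n=f_n-\widehat{f_n}\in[f_n]=\mathscr{R}_{W_n}$ and $J_n(0)=1$ (every element of $[Sf_n]$ vanishes at $0$, so $\widehat{f_n}(0)=0$), $J_n$ is itself a competitor in $(\star)$. For the reverse inequality, let $\Phi_n$ be the norming functional of $J_n$, so $\Phi_n(J_n)=\|J_n\|_p^p$ and $\|\Phi_n\|_{p'}=\|J_n\|_p^{p-1}$; the Birkhoff--James relation $J_n\perp_p[Sf_n]$ characterizing the metric projection means precisely that $\Phi_n$ annihilates $[Sf_n]=\{g\in\mathscr{R}_{W_n}:g(0)=0\}$. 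Hence for any competitor $g$ we have $g-J_n\in[Sf_n]$, so $\Phi_n(g)=\Phi_n(J_n)=\|J_n\|_p^p$, and $\|J_n\|_p^p\le\|\Phi_n\|_{p'}\|g\|_p=\|J_n\|_p^{p-1}\|g\|_p$ gives $\|J_n\|_p\le\|g\|_p$. This establishes $(\star)$ and shows $J_n$ is its unique minimizer.

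Parts (1) and (2) then follow quickly. For (1), $J_n$ is $p$-inner by Proposition \ref{poiutorgorrrefdxxx}, and it is nontrivial because $J_n(0)=1$. For (2), the constraint sets $C_n:=\{g\in\mathscr{R}_{W_n}:g(0)=1\}$ are nested, $C_1\supset C_2\supset\cdots$, so by $(\star)$ the minima $\|J_n\|_p=\min_{C_n}\|\cdot\|_p$ increase with $n$. For part (3), necessity is immediate from $(\star)$: if $W$ is a zero set, choose $F\in\mathscr{R}_W\setminus\{0\}$, divide out the zero of $F$ at the origin to obtain $g\in\mathscr{R}_W$ with $g(0)=1$; then $g\in C_n$ for every $n$, whence $\sup_n\|J_n\|_p\le\|g\|_p<\infty$. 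For sufficiency and convergence, suppose $\sup_n\|J_n\|_p=:L<\infty$, so $\|J_n\|_p\nearrow L$ with $L\ge|J_n(0)|=1>0$. Using uniform convexity of $\ell^p_A$ \cite{Clark}: for $m>n$ the midpoint $\tfrac12(J_n+J_m)$ lies in the convex set $C_n$, so $\|\tfrac12(J_n+J_m)\|_p\ge\|J_n\|_p\to L$, while $\|J_n\|_p,\|J_m\|_p\le L$; uniform convexity then forces $\|J_n-J_m\|_p\to0$, and $J_n\to J$ in norm with $\|J\|_p=L$. As each $C_n$ is closed and $J_m\in C_n$ for $m\ge n$, we get $J\in\bigcap_nC_n=\{g\in\mathscr{R}_W:g(0)=1\}$, so $J\in\mathscr{R}_W$ with $J(0)=1$, hence $J\ne0$ and $W$ is a zero set. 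Finally $J$ is $p$-inner because Birkhoff--James orthogonality passes to norm limits: from $\|J_n\|_p\le\|J_n+tS^kJ_n\|_p$ for all scalars $t$ and all $k\ge1$, letting $n\to\infty$ yields $J\perp_pS^kJ$.

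I expect the main obstacle to be the first step, namely pinning down $[f_n]=\mathscr{R}_{W_n}$ and the resulting clean description of $[Sf_n]$; once $(\star)$ is available, monotonicity, the zero-set dichotomy, and the convergence of $J_n$ are formal consequences of $(\star)$ and standard uniform-convexity geometry. The codimension count is exactly what converts the abstract metric-projection definition of $J_n$ into an honest minimization over the concrete nested constraint sets $C_n$, which is what drives parts (2) and (3).
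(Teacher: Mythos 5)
Your proposal is correct, but it takes a genuinely different route from the paper's proof. The shared core is your extremal formula $(\star)$, which is exactly the paper's Proposition \ref{lllklklklqlqlqlql} (that $J_n$ solves $\inf\{\|g\|_p : g\in\mathscr{R}_{W_n},\ g(0)=1\}$); you prove it by codimension counting ($[f_n]=\mathscr{R}_{W_n}$, hence $[Sf_n]=\{g\in\mathscr{R}_{W_n}:g(0)=0\}$) together with a norming-functional argument based on \eqref{ppsduwebxzrweq5} and \eqref{poisdpfo876w8rg}, whereas the paper proves it by dividing a competitor $g$ by $f_n$ ($n$ applications of Proposition \ref{jjjjdjdjsslpl}) and approximating $g$ by polynomial multiples of $f_n$. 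After that the arguments diverge substantially: the paper routes everything through the auxiliary extremal problem \eqref{bbbbcbb1b1b1} (the distance from the constant $1$ to $\mathscr{R}_{W_n}$), using the explicit formula of Proposition \ref{7734628234r} to convert between $\|\Phi_n\|_p$ and $\|J_n\|_p$, the nested-projection Lemma \ref{propnestlwplem} (whose proof needs the Pythagorean inequalities of Theorem \ref{pythagthm}) to obtain monotonicity, the boundedness criterion, and convergence, and finally a dominated-convergence argument on coefficients ($J_n^{\langle p-1\rangle}\to J^{\langle p-1\rangle}$ in $\ell^{p'}_A$) to show the limit is $p$-inner. You bypass all of this: monotonicity is immediate from the nesting $C_{n+1}\subset C_n$ of your constraint sets; the Cauchy property of $(J_n)$ comes from the standard uniform-convexity midpoint argument applied to minimizers over nested closed convex sets; and $p$-innerness of the limit follows simply because Birkhoff--James orthogonality is defined by norm inequalities, which pass to norm limits. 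Your version buys economy and self-containedness (no Pythagorean inequalities, no $\Phi$-formula, and a cleaner limit step than the paper's coefficientwise argument); the paper's version buys quantitative information, namely the exact algebraic relation between $\|J_n\|_p$ and the distance from $1$ to $\mathscr{R}_{W_n}$, plus a reusable lemma on metric projections onto nested subspaces. Two small points if you write this up: your $\Phi_n$ (a norming functional) collides with the paper's $\Phi_n$ (the solution of \eqref{bbbbcbb1b1b1}), so rename it; and the codimension claims (that $\mathscr{R}_{W_n}$ has codimension exactly $n$, which requires linear independence of the evaluation functionals via Hermite interpolation, and that the sum of a closed subspace and a finite-dimensional subspace is closed) deserve explicit justification.
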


When $p = 2$, the $2$-inner functions $J_n$ corresponding to $f_n$ from \eqref{9384yrteiugfdsa} turn out to be 
$$J_{n}(z) = \left(\prod_{k = 1}^{n} \frac{1}{w_k}\right) \prod_{k = 1}^{n} \frac{w_k - z}{1 - \overline{w_k} z},$$
which, apart from a multiplicative constant,  are finite Blaschke products.  Furthermore, Parseval's theorem, and the fact that 
$$\Big|\prod_{k = 1}^{n} \frac{w_k - e^{i \theta}}{1 - \overline{w_k} e^{i \theta}}\Big| = 1, \quad \theta \in [0, 2 \pi],$$
shows that 
$$\|J_n\|_{2} = \prod_{k = 1}^{n} \frac{1}{|w_k|}.$$ Thus in the $p = 2$ case, the necessary and sufficient condition for $W = (w_k)_{k \geq 1} \subset \D \setminus \{0\}$ to be a zero set for $\ell^{2}_{A} = H^2$ is 
$$\sup_{n \geq 1} \|J_{n}\|_{2} = \sup_{n \geq 1} \prod_{k = 1}^{n} \frac{1}{|w_k|} < \infty,$$ which, as noted already in \eqref{024ouriehtrjgeqw}, is precisely the well-known Blaschke condition. 

In final sections of this paper we will use Theorem \ref{italiansubthm} along with estimates for the norms $\| J_n \|_p$   to derive sufficient conditions for zero sets for $\ell^{p}_{A}$ spaces.  

We will also derive formulas for $p$-inner functions corresponding to finite zero sets.  Related extremal problems will enable us to control the limiting case of infinite zero sets.   Theorem \ref{italiansubthm} also enables us to construct new concrete examples of zero sets for $\ell^{p}_{A}$, going beyond those previously known from \cite{MR0148874,MR947146}.
As mentioned earlier, they include the important new example of non-Blaschke zero sequences for $\ell^{p}_{A}$ when $p>2$.


\section{The basics of $\ell^{p}_{A}$ spaces}

For $p \in (1, \infty)$ let $\ell^p$ denote the set of sequences
$$\mathbf{a} = (a_0, a_1, a_2,\ldots)$$ of complex numbers for which
$$\|\mathbf{a}\|_{p} := \left(\sum_{k  \geq 0} |a_k|^{p}\right)^{1/p} < \infty.$$
The quantity $\|\mathbf{a}\|_{p}$ defines a norm which makes $\ell^{p}$ a uniformly convex Banach space \cite[p.~117]{Car}. 

For any $p \in (1, \infty)$, let $p'$ denote the usual H\"{o}lder conjugate index to $p$. We know that $(\ell^{p})^{*}$, the norm dual of $\ell^{p}$, can be isometrically  identified with
$\ell^{p'}$ by means of the bi-linear pairing
\begin{equation}\label{BLP}
(\mathbf{a}, \mathbf{b}) := \sum_{k \geq 0} a_k b_k, \quad \mathbf{a} \in \ell^{p}, \mathbf{b} \in \ell^{p'}.
\end{equation}

We now equate $\ell^p$ with a space of analytic functions on $\D$ as follows. For an $\mathbf{a} \in \ell^p$ we set
\begin{equation}\label{bvfhjugi9r}
a(z) := \sum_{k \geq 0} a_k z^k
\end{equation}
 to be the power series whose sequence of Taylor coefficients is equal to $\mathbf{a}$. Note the use of $\mathbf{a}$ (bold faced) to represent a sequence and $a$ (not bold faced) to represent the corresponding power series. By H\"{o}lder's inequality we see that
\begin{align*}
\sum_{k \geq 0} |a_k| |z^k| & \leq \left(\sum_{k \geq 0} |a_k|^p\right)^{1/p} \left(\sum_{k \geq 0} |z|^{k p'}\right)^{1/p'}\\
& = \|\mathbf{a}\|_{p} \left( \frac{1}{1 - |z|^{p'}}\right)^{1/p'}, \quad z \in \D.
\end{align*}
 This implies that the above power series in \eqref{bvfhjugi9r} converges uniformly on compact subsets of $\D$ and thus determines an analytic function on $\mathbb{D}$. If we define
$$\ell^{p}_{A} := \{a: \mathbf{a} \in \ell^p\}$$ and endow $a$ with the norm $\|\mathbf{a}\|_{p}$, i.e., 
$$\|a\|_{p} := \left(\sum_{k \geq 0} |a_k|^p \right)^{1/p},$$
then $\ell^p_{A}$ becomes a Banach space of analytic functions on $\mathbb{D}$. Furthermore, for each $z \in \mathbb{D}$ and $a \in \ell^{p}_{A}$ we have
\begin{equation}\label{oeiwjfdkv}
|a(z)| \leq \|a\|_{p} \left( \frac{1}{1 - |z|^{p'}}\right)^{1/p'}.
\end{equation}
Thus if a sequence of functions from $\ell^{p}_{A}$ converges in the norm of $\ell^{p}_{A}$, then it converges uniformly on compact subsets of $\mathbb{D}$. Also worth pointing out here is that \eqref{oeiwjfdkv} extends to derivatives in that if $n \in \N \cup \{0\}$ and $K$ is a compact subset of $\D$, then 
\begin{equation}\label{4398742eqw}
|a^{(n)}(z)| \leq C_{n, K} \|a\|_{p}, \quad z \in K.
\end{equation}
One can prove this by using \eqref{oeiwjfdkv} and the Cauchy integral formula. 

\begin{Remark}
  We emphasize that when $p \neq 2$, $\|f\|_p$ is the norm in $\ell^{p}_{A}$ of its coefficient sequence, and {\em not}, as the notation might appear to suggest, its norm in any Hardy space $H^p$.
\end{Remark}

When $p \in (1, \infty)$, the space $\ell^{p}$, and hence $\ell^{p}_{A}$, is uniformly convex. Consequently it enjoys the unique {\em nearest-point property}, i.e., for each closed subspace $\mathcal{M} \subset \ell^{p}_{A}$ and any $a \in \ell^{p}_{A}$, there is a unique $a_{\mathcal{M}} \in \mathcal{M}$ satisfying 
$$\|a - a_{\mathcal{M}}\|_{p} = \inf\{\|a - b\|_{p}: b \in \mathcal{M}\}.$$

\begin{Definition}\label{24387tr9euoifjdkw}
The vector $a_{\mathcal{M}}$ is called the {\em metric projection} of $a$ onto $\mathcal{M}$ while the vector $a - a_{\mathcal{M}}$ is called the {\em metric co-projection}. 
\end{Definition}

It is important to note here that despite the use of the suggestive word ``projection,'' the mapping $a \mapsto a_{\mathcal{M}}$ is generally nonlinear. Of course when $p = 2$, the map $a \mapsto a_{\mathcal{M}}$ is the orthogonal projection of $a$ onto $\mathcal{M}$. 

For fixed $w \in \mathbb{D}$ and an analytic function $f$ on $\mathbb{D}$, define the difference quotient 
\begin{equation}\label{Qw}
(Q_wf)(z) := \frac{f(z) - f(w)}{z - w} 
\end{equation}
and note that $Q_{w} f$ is analytic on $\D$. 
In many well-known Banach spaces of analytic functions on $\mathbb{D}$ (Hardy space, Bergman space, Dirichlet space) we can ``divide out a zero" and still remain in the space. Equivalently stated, if $f$ belongs to the space then so does $Q_wf$. The $\ell^{p}_{A}$ spaces also enjoy this property. 

\begin{Proposition}\label{jjjjdjdjsslpl}
Let $p \in (1, \infty)$ and $w \in \mathbb{D}$.  If $f \in \ell^p_A$  then $Q_{w} f \in \ell^{p}_{A}$. 
\end{Proposition}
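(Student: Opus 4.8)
The plan is to compute the Taylor coefficients of $Q_w f$ explicitly, recognize the resulting sequence as an absolutely convergent series of ``shifted'' copies of the coefficient sequence of $f$, and then apply the triangle inequality in $\ell^p$ to bound its norm. Write $f(z) = \sum_{k \geq 0} a_k z^k$ with $\mathbf{a} = (a_k)_{k \geq 0} \in \ell^p$, and fix $w \in \D$ with $r := |w| < 1$.

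First I would use the elementary identity $\dfrac{z^k - w^k}{z-w} = \sum_{j=0}^{k-1} w^{k-1-j} z^j$ together with $f(z) - f(w) = \sum_{k \geq 1} a_k (z^k - w^k)$ to obtain, at least formally,
\[
    (Q_w f)(z) = \sum_{k \geq 1} a_k \sum_{j=0}^{k-1} w^{k-1-j} z^j = \sum_{j \geq 0} b_j z^j, \qquad b_j := \sum_{m \geq 0} a_{j+1+m}\, w^{m}.
\]
The one genuinely technical point is justifying this rearrangement of a double series into a power series in $z$: for each $z \in \D$ one checks, via H\"{o}lder's inequality exactly as in the derivation of \eqref{oeiwjfdkv}, that $\sum_k |a_k|\rho^k < \infty$ for every $\rho \in (0,1)$, so the double sum converges absolutely on compact subsets of $\D$ and the interchange collecting powers of $z$ is legitimate. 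Since $Q_w f$ is already known to be analytic on $\D$, the $b_j$ are genuinely its Taylor coefficients, and it remains only to show $\mathbf{b} = (b_j)_{j \geq 0} \in \ell^p$.

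For the final step, let $T$ denote the backward shift on $\ell^p$, $(T\mathbf{a})_j = a_{j+1}$; passing to a tail of a sequence only decreases its $\ell^p$ norm, so $\|T^{m}\mathbf{a}\|_p \leq \|\mathbf{a}\|_p$ for every $m$. The formula for $b_j$ says precisely that $\mathbf{b} = \sum_{m \geq 0} w^m\, T^{m+1}\mathbf{a}$, and since
\[
    \sum_{m \geq 0} |w|^m \, \|T^{m+1}\mathbf{a}\|_p \;\leq\; \sum_{m \geq 0} r^m \, \|\mathbf{a}\|_p \;=\; \frac{\|\mathbf{a}\|_p}{1 - r} \;<\; \infty ,
\]
this series converges absolutely in the complete space $\ell^p$; by Minkowski's inequality its sum satisfies $\|\mathbf{b}\|_p \leq (1-r)^{-1}\|\mathbf{a}\|_p$. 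Hence $Q_w f \in \ell^p_A$, with the quantitative estimate $\|Q_w f\|_p \leq (1-|w|)^{-1}\|f\|_p$ as a byproduct that may be convenient later. I expect the only real obstacle to be the bookkeeping in the coefficient computation and the justification of the rearrangement; once the explicit formula for $b_j$ is in hand, the norm bound is immediate from the contractivity of the backward shift together with the triangle inequality.
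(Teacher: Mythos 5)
Your proof is correct, and it takes a genuinely different route from the paper's. Both arguments start from the same coefficient identity $b_j = \sum_{m \geq 0} a_{j+1+m} w^m$, but the paper then proves the endpoint bounds for $p=1$ and $p=\infty$ and invokes the Riesz--Thorin interpolation theorem to cover $p \in (1,\infty)$, whereas you recognize the coefficient sequence as the operator series $\mathbf{b} = \sum_{m \geq 0} w^m T^{m+1}\mathbf{a}$ in powers of the backward shift $T$, which is a contraction on $\ell^p$, and conclude by absolute convergence of this series in the Banach space $\ell^p$. Your argument is more elementary and self-contained: it needs no interpolation theorem, it works simultaneously for every $p \in [1,\infty]$ (the contractivity of $T$ is uniform in $p$), and it yields the same quantitative bound $\|Q_w f\|_p \leq (1-|w|)^{-1}\|f\|_p$; as a bonus it exhibits $Q_w$ as a norm-convergent Neumann-type series of weighted backward shifts, which is a cleaner structural statement than mere boundedness. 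What the paper's approach buys in exchange is brevity once the two endpoint estimates are accepted as ``elementary,'' and it illustrates a technique (endpoint estimates plus interpolation) that remains available in situations where no such explicit operator decomposition presents itself. Your handling of the one delicate point --- justifying the rearrangement of the double series via absolute convergence on compact subsets of $\mathbb{D}$, and identifying the $\ell^p$-limit of the partial sums with $\mathbf{b}$ through coordinatewise convergence --- is sound, so there is no gap.
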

\begin{proof}  Let
$
   f(z) = \sum_{k \geq 0} a_k z^k.
$ 
If $w = 0$, then the assertion is trivial.  Otherwise, using the identity 
$$\frac{f(z) - f(w)}{z - w} = \sum_{n \geq 0} \Big(\sum_{k \geq 0} a_{k + n + 1} w^k\Big) z^n,$$ 
it is elementary to show that 
 when $p=1$ we have
\[
\left\|   \frac{f(z) - f(w)}{z-w}  \right\|_1
    \leq  \| f \|_1 \cdot \frac{1}{1-|w|},
\]
and similarly for $p=\infty$,
\[
\left\|   \frac{f(z) - f(w)}{z-w}  \right\|_{\infty} 
    \leq  \| f \|_{\infty} \cdot \frac{1}{1-|w|},
\]
where in the above we are using the norm
$$\|a\|_{\infty} := \sup\{|a_k|: k \geq 0\}$$
on $\ell^{\infty}_{A}$ (analytic functions on $\D$ with bounded Taylor coefficients). 
The Riesz-Thorin interpolation theorem \cite{BL} shows that the linear operator $Q_w$ is bounded for any $p \in (1, \infty)$, with norm bounded above by $(1-|w|)^{-1}$ as well.    \end{proof}


\section{Banach Space Geometry}

The notion of Birkhoff-James orthogonality \cite{AMW, Jam} extends the concept of orthogonality from an inner product space to a more general normed linear space.   Let $\mathbf{x}$ and $\mathbf{y}$ be vectors from  a normed linear space $\mathcal{X}$.  We say that $\mathbf{x}$ is {\em orthogonal} to $\mathbf{y}$ in the Birkhoff-James sense if
\begin{equation}\label{2837eiywufh[wpofjk}
      \|  \mathbf{x} + \beta \mathbf{y} \|_{\mathcal{X}} \geq \|\mathbf{x}\|_{\mathcal{X}}
\end{equation}
for all scalars $\beta$. In this situation we write $\mathbf{x} \perp_{\mathcal{X}} \mathbf{y}$. The relation $\perp_{\mathcal{X}}$ is generally neither symmetric nor linear.    It is straightforward to show that when $\mathcal{X}$ is a Hilbert space, with inner product $\perp$, then $\mathbf{x} \perp \mathbf{y} \iff \mathbf{x} \perp_{\mathcal{X}} \mathbf{y}$. 

When $\mathcal{X} = \ell^{p}$,  let us write $\perp_p$ in place of the more cumbersome $\perp_{\ell^p}$.  Of particular importance here
is the following explicit criterion for   the relation $\perp_p$ when $p \in (1, \infty)$:
\begin{equation}\label{BJp}
 \mathbf{a} \perp_{p} \mathbf{b}  \iff   \sum_{k \geq 0} |a_k|^{p - 2} \overline{a_k} b_k  = 0,
\end{equation}
where any occurrence of ``$|0|^{p - 2} 0$'' in the sum above is interpreted as zero \cite[Example 8.1]{Jam}.

Borrowing from the expression in \eqref{BJp} we define, for a complex number $z=re^{i\theta}$, and any $s > 0$, the quantity
\begin{equation}\label{definition-de-zs}
z^{\langle s \rangle} = (re^{i\theta})^{\langle s \rangle} := r^{s} e^{-i\theta}.
\end{equation}
Let us begin by noting some simple properties of this non-linear operation, which is tied to Birkhoff-James orthogonality in $\ell^p$. We leave the verification to the reader.

\begin{Lemma}\label{776652bbbb}
 Let $p \in (1, \infty)$ and $p'$ denote the H\"{o}lder conjugate of $p$. Then for $w, z \in \mathbb{C}$, $n \in \mathbb{N}_{0}$, and $s > 0$, we have 
$$  (zw)^{\langle p-1\rangle} = z^{\langle p-1\rangle} w^{\langle p-1\rangle},  $$
      $$|z|^p =  z^{\langle p-1 \rangle}z,$$
      \[
      (z^{\langle s \rangle})^n = (z^n)^{\langle s \rangle},
      \]
      $$\quad (z^{\langle p-1 \rangle})^{\langle p'-1 \rangle} = z.
$$
\end{Lemma}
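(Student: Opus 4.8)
The plan is to verify each identity by passing to polar coordinates, where the operation $z^{\langle s\rangle}$ acts transparently: it raises the modulus to the power $s$ and negates (equivalently, conjugates) the argument. Writing $z = re^{i\theta}$ and $w = \rho e^{i\phi}$ with $r,\rho \geq 0$, I would first dispose of the degenerate case. When $z=0$ we have $r=0$, and since $s>0$ the quantity $z^{\langle s\rangle} = r^s e^{-i\theta} = 0$ regardless of how one reads the (undefined) argument; hence all four identities hold trivially at $0$, and henceforth one may assume the relevant factors are nonzero.

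The first three identities are then immediate computations. For multiplicativity, $zw = r\rho\, e^{i(\theta+\phi)}$, so $(zw)^{\langle p-1\rangle} = (r\rho)^{p-1} e^{-i(\theta+\phi)}$, which matches $z^{\langle p-1\rangle} w^{\langle p-1\rangle} = r^{p-1}e^{-i\theta}\cdot \rho^{p-1}e^{-i\phi}$. The identity $|z|^p = z^{\langle p-1\rangle} z$ follows from $z^{\langle p-1\rangle} z = r^{p-1}e^{-i\theta}\cdot r e^{i\theta} = r^p = |z|^p$. For the power rule, using $z^n = r^n e^{in\theta}$ one finds both $(z^{\langle s\rangle})^n$ and $(z^n)^{\langle s\rangle}$ equal to $r^{sn} e^{-in\theta}$.

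The only identity carrying any content is the inversion $(z^{\langle p-1\rangle})^{\langle p'-1\rangle} = z$. Here I would compute $z^{\langle p-1\rangle} = r^{p-1} e^{-i\theta}$, then apply the operation a second time to obtain $(r^{p-1})^{p'-1} e^{i\theta} = r^{(p-1)(p'-1)} e^{i\theta}$. The step that makes this collapse back to $z = re^{i\theta}$ is the conjugate-index identity $(p-1)(p'-1) = 1$, which follows from $1/p + 1/p' = 1$ (equivalently $p' - 1 = 1/(p-1)$). This is the one place where the specific exponents $p-1$ and $p'-1$, rather than arbitrary $s$, are essential, and it is also where the two applications of the argument-negation combine to restore the original phase $e^{i\theta}$.

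I expect no genuine obstacle: the whole lemma is a direct polar-coordinate calculation, and the only nonroutine ingredient is the conjugate-index arithmetic in the final identity. The single point worth flagging is the care needed at $z=0$, where the argument $\theta$ is undefined but the operation remains well-defined (and equal to $0$) precisely because $s>0$; this is what lets one treat the degenerate case separately and then reduce to the nonzero polar computations above.
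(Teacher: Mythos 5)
Your proof is correct, and it is precisely the verification the paper has in mind: the paper states this lemma with the remark ``We leave the verification to the reader,'' and the intended check is exactly your polar-coordinate computation, with the conjugate-index identity $(p-1)(p'-1)=1$ doing the only real work in the final inversion identity. Your extra care at $z=0$ (where $s>0$ makes $z^{\langle s\rangle}=0$ unambiguously) is a sensible touch that the paper silently elides.
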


In light of the definition \eqref{definition-de-zs}, for $\mathbf{a} = (a_k)_{k \geq 0}$, let us write
 \begin{equation}\label{uewuweuer}
\mathbf{a}^{\langle p - 1 \rangle} := (a_k^{\langle p - 1 \rangle})_{k \geq 0}.
\end{equation}

If $\mathbf{a} \in \ell^{p}$, one can see that $\mathbf{a}^{\langle p - 1\rangle} \in \ell^{p'}$ \cite{CR2} and thus from \eqref{BJp},
\begin{equation}\label{ppsduwebxzrweq5}
\mathbf{a} \perp_{p} \mathbf{b} \iff (\mathbf{a}^{\langle p - 1\rangle}, \mathbf{b})= 0,
\end{equation}
where recall the bi-linear pairing $(\cdot, \cdot)$ from \eqref{BLP}.
Note that  in this special case $\perp_{p}$ is therefore linear in its second argument when $p \in (1, \infty)$, and thus it makes sense to speak of a vector being orthogonal to a subspace of $\ell^{p}$.  In fact, if $\mathcal{M}$ is a subspace of $\ell^{p}$, one can use \eqref{2837eiywufh[wpofjk} to see that for any vector $\mathbf{a} \in \ell^{p}$, the metric projection 
$\mathbf{a}_{\mathcal{M}}$ from Definition \ref{24387tr9euoifjdkw} satisfies 
\begin{equation}\label{poisdpfo876w8rg}
          \mathbf{a} - \mathbf{a}_{\mathcal{M}} \perp_{p} \mathcal{M}.
\end{equation}

With the isometric identification of $\ell^{p}_{A}$ with $\ell^{p}$ via Taylor coefficients, i.e., 
$$\mathbf{a}  = (a_{k})_{k \geqslant 0} \longleftrightarrow a(z) = \sum_{k \geq 0} a_{k} z^k,$$
we can pass the Birkhoff-James orthogonality from $\ell^{p}$ to $\ell^{p}_{A}$ via $$a(z) = \sum_{k \geq 0} a_k z^k, \quad b(z) = \sum_{k \geq 0} b_k z^k,$$
and write
       $$a \perp_p b \iff \mathbf{a} \perp_p \mathbf{b}.$$
       Similarly, we have the identification
$$ a^{\langle p - 1 \rangle}(z) = \sum_{k \geq 0} a_k^{\langle p - 1 \rangle} z^k.$$


%
%
%
%
%

The proofs of our main results will need the following ``Pythagorean inequalities'' from \cite{CR}.

\begin{Theorem}\label{pythagthm}
Suppose that $\mathbf{x} \perp_p \mathbf{y}$ in $\ell^p$.
If $p \in (1, 2]$, then
\begin{align*}
   \| \mathbf{x} + \mathbf{y} \|^p_p & \leq  \|\mathbf{x}\|^p_p + \frac{1}{2^{p-1}-1}\|\mathbf{y}\|^p_p \\
    \| \mathbf{x} + \mathbf{y} \|^2_p & \geq  \|\mathbf{x}\|^2_p + (p-1)\|\mathbf{y}\|^2_p.
\end{align*}
If $p \in [2, \infty)$, then
\begin{align*}
   \| \mathbf{x} + \mathbf{y} \|^p_p & \geq  \|\mathbf{x}\|^p_p + \frac{1}{2^{p-1}-1}\|\mathbf{y}\|^p_p \\
    \| \mathbf{x} + \mathbf{y} \|^2_p & \leq  \|\mathbf{x}\|^2_p + (p-1)\|\mathbf{y}\|^2_p.
\end{align*}
\end{Theorem}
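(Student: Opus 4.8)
The plan is to reduce all four inequalities to \emph{scalar} inequalities in two complex variables, and then to exploit the fact that Birkhoff--James orthogonality makes the relevant first-order term vanish upon summation. Write $\mathbf{x}=(x_k)_{k\geq0}$, $\mathbf{y}=(y_k)_{k\geq0}$ and set
$$E(a,b) := |a+b|^p - |a|^p - p\,\operatorname{Re}\big(a^{\langle p-1\rangle}b\big), \qquad a,b\in\C.$$
The point of $E$ is that $\frac{d}{dt}|a+tb|^p\big|_{t=0}=p\,\operatorname{Re}(a^{\langle p-1\rangle}b)$, so $E(a,b)$ is exactly the remainder of $|a+b|^p$ past its first-order term at $a$. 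Summing over $k$ and using the orthogonality criterion \eqref{ppsduwebxzrweq5} in the form $\sum_{k\geq0}\operatorname{Re}(x_k^{\langle p-1\rangle}y_k)=\operatorname{Re}(\mathbf{x}^{\langle p-1\rangle},\mathbf{y})=0$, one gets $\sum_k E(x_k,y_k)=\|\mathbf{x}+\mathbf{y}\|_p^p-\|\mathbf{x}\|_p^p$. Hence the two $p$-th power inequalities are \emph{equivalent} to the coordinatewise scalar bounds $E(a,b)\leq\frac{1}{2^{p-1}-1}|b|^p$ for $p\in(1,2]$ and $E(a,b)\geq\frac{1}{2^{p-1}-1}|b|^p$ for $p\in[2,\infty)$.

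To prove these scalar bounds I would first dispose of the case $a=0$, where the claim reduces to $(2^{p-1}-1)^{-1}\gtrless1$, which is precisely the condition $p\lessgtr2$. For $a\neq0$ I would use homogeneity and rotation invariance: since $E(e^{i\phi}a,e^{i\phi}b)=E(a,b)$ and $E$ is positively homogeneous of degree $p$ jointly in $(a,b)$ (both checked from \eqref{definition-de-zs} and Lemma \ref{776652bbbb}), we may normalize $a=1$, reducing to the one-complex-variable inequalities
$$|1+z|^p-1-p\,\operatorname{Re}z \;\lessgtr\; \frac{1}{2^{p-1}-1}\,|z|^p, \qquad z\in\C,$$
with ``$\leq$'' for $p\in(1,2]$ and ``$\geq$'' for $p\in[2,\infty)$. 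Writing $z=s+it$ this is a two-real-variable calculus problem; I would fix $|z|$ and vary $\arg z$ to show the extremal configuration is $z$ real, then reduce to a single-variable optimization in which the constant $\frac{1}{2^{p-1}-1}$ is forced (consistent with the disjoint-support example $\mathbf{x}=(1,0,\dots)$, $\mathbf{y}=(0,1,0,\dots)$, where equality occurs at $p=2$).

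The squared-norm inequalities need a different device, since $\|\cdot\|_p^2$ is not additive over coordinates. Here I would run a second-order analysis of $\Phi(t):=\|\mathbf{x}+t\mathbf{y}\|_p^2$ on $\R$ (assuming $\mathbf{x}\neq0$, the other case being trivial). Setting $G(t):=\sum_k|x_k+ty_k|^p$, one has $\Phi=G^{2/p}$, $\Phi(0)=\|\mathbf{x}\|_p^2$, and, because orthogonality is exactly the stationarity condition, $\Phi'(0)=2\|\mathbf{x}\|_p^{2-p}\operatorname{Re}(\mathbf{x}^{\langle p-1\rangle},\mathbf{y})=0$. Differentiating $|u|^p$ twice gives $p(p-2)|u|^{p-4}(\operatorname{Re}(\bar u y))^2+p|u|^{p-2}|y|^2$, and since $(\operatorname{Re}(\bar u y))^2\leq|u|^2|y|^2$ the sum $G''(t)$ lies between $p\sum_k|u_k|^{p-2}|y_k|^2$ and $p(p-1)\sum_k|u_k|^{p-2}|y_k|^2$ (with $u_k=x_k+ty_k$). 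Applying Hölder's inequality with the conjugate pair $p/(p-2),\,p/2$ when $p\geq2$ (and the dual pair $2/(2-p),\,2/p$ when $p<2$) to pass between $\sum_k|u_k|^{p-2}|y_k|^2$ and $G^{(p-2)/p}\|\mathbf{y}\|_p^2$, and feeding this through $\Phi''=\frac{2}{p}G^{2/p-2}[(\tfrac{2}{p}-1)(G')^2+GG'']$ while discarding the $(G')^2$ term (which has the favorable sign in each regime), the powers of $G$ cancel and $\Phi''(t)$ is bounded by the constant $2(p-1)\|\mathbf{y}\|_p^2$ uniformly in $t$, with the direction flipping according to the sign of $p-2$. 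Taylor's formula with integral remainder, $\Phi(1)=\Phi(0)+\int_0^1(1-t)\Phi''(t)\,dt$, then yields $\Phi(1)\lessgtr\Phi(0)+(p-1)\|\mathbf{y}\|_p^2$, the constant $(p-1)$ emerging exactly from $\int_0^1(1-t)\,dt=\tfrac12$. This is the sharp $2$-uniform smoothness ($p\geq2$) and convexity ($p\leq2$) of $\ell^p$, specialized to orthogonal increments.

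The main obstacle is pinning down the \emph{exact} constants. For the $p$-th power inequalities the one-variable normalization still leaves a genuinely two-dimensional inequality in $z$, whose optimization (showing the real case is extremal and that $\frac{1}{2^{p-1}-1}$ is sharp) must be carried out carefully. For the squared inequalities the delicate points are the uniform control of $\Phi''$ through the correct Hölder pairing that recovers $(p-1)$ rather than a larger constant, and, when $1<p<2$, the failure of $t\mapsto|x_k+ty_k|^p$ to be twice differentiable wherever $x_k+ty_k=0$; this last issue I would handle by noting each coordinate contributes only finitely many such $t$ and running the remainder estimate with the almost-everywhere second derivative, which remains bounded by the same constant.
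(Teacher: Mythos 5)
You should know at the outset that the paper never proves Theorem \ref{pythagthm}: it is imported wholesale from \cite{CR} (with origins in \cite{Byn, BD, CMP1} and a general treatment in \cite{CH2, CR}), and in those references the argument runs through weak parallelogram laws for $\ell^p$, not through anything resembling your coordinatewise and differential analysis. So your proposal is necessarily a different route, and half of it is sound. The two squared-norm inequalities are exactly the classical $2$-smoothness/$2$-convexity of $\ell^p$ with sharp constant $p-1$, and your second-order argument is correct: the formula for $(\,|u_k|^p)''$, the H\"older pairing $\sum_k|u_k|^{p-2}|y_k|^2 \lessgtr G^{(p-2)/p}\|\mathbf{y}\|_p^2$ in the two regimes, the favorable sign of the $(G')^2$ term, and Taylor's formula with $\Phi'(0)=0$ all check out. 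Two refinements are needed: first, observe that orthogonality itself gives $G(t)=\|\mathbf{x}+t\mathbf{y}\|_p^p\geq\|\mathbf{x}\|_p^p>0$, which is what makes $\Phi=G^{2/p}$ and its derivatives legitimate; second, for $1<p<2$ your patch is stated too weakly --- existence of $\Phi''$ almost everywhere does not by itself validate Taylor's formula with integral remainder (singular functions are the obstruction); what you need is that $\Phi'$ is absolutely continuous. That does hold here, because $|u_k(t)|\geq|y_k|\,|t-t_k|$ with at most one singular time $t_k$ per coordinate, whence $\sum_k\int_0^1|u_k(t)|^{p-2}|y_k|^2\,dt\leq\tfrac{2}{p-1}\|\mathbf{y}\|_p^p$, so $G'$ has summable termwise variation; this is the statement to prove, not merely ``a.e.\ differentiability.''

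The genuine gap is in the $p$-th power inequalities. Your reduction via $E(a,b)$ and the vanishing of $\operatorname{Re}(\mathbf{x}^{\langle p-1\rangle},\mathbf{y})$ is valid (only the direction ``scalar implies vector'' is needed, so the claimed ``equivalence'' is harmless), but the resulting scalar inequality $|1+z|^p-1-p\operatorname{Re}z \gtrless |z|^p/(2^{p-1}-1)$ carries the entire content of the theorem, and you do not prove it; worse, the sketch you give would not go through as described. Fixing $|z|=\rho$ and varying $\theta=\arg z$, the derivative of the numerator is $p\rho\sin\theta\,\bigl[1-|1+z|^{p-2}\bigr]$, so the interior angular critical point lies on the circle $|1+z|=1$, \emph{not} on the real axis; the real-axis reduction only emerges after you evaluate the critical value there (it is $\tfrac{p}{2}\rho^2$), optimize it over $\rho\leq2$ (the extremum occurs at $\rho=2$, where the critical angle happens to collide with $z=-2$), and treat $\rho>2$ separately. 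More seriously, the one-variable optimization does not ``force'' the constant: the extremal value of $\bigl(|1-t|^p-1+pt\bigr)/t^p$ over real $t$ is strictly larger than $\tfrac{1}{2^{p-1}-1}$ for every $p\neq2$ (at $p=3$ it is $\approx 0.586$ versus $1/3$), and your disjoint-support example produces the ratio $1$, so it certifies nothing about this constant --- the theorem's constant is simply not sharp pointwise. Consequently, even after the corrected reduction you are still left with proving a two-parameter real inequality, e.g.\ $(t-1)^p-1+pt\geq t^p/(2^{p-1}-1)$ for all $t\geq2$ and $p\geq2$, together with its reverse for $p\in(1,2]$; it is true, but no argument for it appears in your proposal. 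Until that inequality is established, the first and third inequalities of the theorem remain unproven.
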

These inequalities have their origins in \cite{Byn, BD, CMP1} while a more general approach is presented in \cite{CH2,CR}.  When $p=2$, the four inequalities merely simplify to the familiar Pythagorean theorem for the Hilbert space $\ell^2$.

We will need the following metric projection lemma which is a consequence of these Pythagorean inequalities. 


\begin{Lemma}\label{propnestlwplem}
  Suppose that $\mathscr{X}_n$ is a closed subspace of $\ell^{p}$ for each $n \in \mathbb{N}$, such that
     \[
          \mathscr{X}_1 \supseteq \mathscr{X}_2 \supseteq \mathscr{X}_3 \supseteq \cdots.
     \]
     Define $\mathscr{X}_{\infty} = \bigcap_{n=1}^{\infty} \mathscr{X}_n$.   If, for each $n \in \N \cup \{\infty\}$, $P_n$ is the metric projection mapping from $\ell^p$ to $\mathscr{X}_n$, then for any $\mathbf{x} \in \mathscr{X}$, $P_n \mathbf{x}$ converges to $P_{\infty} \mathbf{x}$ in norm. 
\end{Lemma}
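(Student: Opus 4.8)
The plan is to show that the sequence $(P_n\mathbf{x})$ is Cauchy in $\ell^p$, identify its limit as an element of $\mathscr{X}_\infty$, and then recognize that limit as $P_\infty\mathbf{x}$ by the uniqueness of the metric projection. Write $d_n := \|\mathbf{x} - P_n\mathbf{x}\|_p = \operatorname{dist}(\mathbf{x}, \mathscr{X}_n)$. Because the subspaces are nested decreasingly, these infima are taken over progressively smaller sets, so $(d_n)$ is monotone increasing; moreover $\mathscr{X}_\infty \subseteq \mathscr{X}_n$ forces $d_n \leq d_\infty$ for every $n$. Hence $(d_n)$ increases to some limit $d \leq d_\infty$ and, in particular, is a Cauchy sequence of real numbers.

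The key step is to convert the Cauchy-ness of the scalars $(d_n)$ into Cauchy-ness of the vectors $(P_n\mathbf{x})$ by invoking the Pythagorean inequalities of Theorem \ref{pythagthm}. Fix $m > n$. Since $P_m\mathbf{x} \in \mathscr{X}_m \subseteq \mathscr{X}_n$, both $P_m\mathbf{x}$ and $P_n\mathbf{x}$ lie in $\mathscr{X}_n$, so $\mathbf{v} := P_n\mathbf{x} - P_m\mathbf{x} \in \mathscr{X}_n$. Setting $\mathbf{u} := \mathbf{x} - P_n\mathbf{x}$, the co-projection relation \eqref{poisdpfo876w8rg} gives $\mathbf{u} \perp_p \mathscr{X}_n$, and since $\perp_p$ is linear in its second argument, $\mathbf{u} \perp_p \mathbf{v}$. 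Observe that $\mathbf{u} + \mathbf{v} = \mathbf{x} - P_m\mathbf{x}$, so $\|\mathbf{u}+\mathbf{v}\|_p = d_m$ while $\|\mathbf{u}\|_p = d_n$. Applying the lower Pythagorean inequality then yields, when $p \in [2,\infty)$,
$$ d_m^p \geq d_n^p + \frac{1}{2^{p-1}-1}\|P_n\mathbf{x} - P_m\mathbf{x}\|_p^p, $$
and when $p \in (1,2]$,
$$ d_m^2 \geq d_n^2 + (p-1)\|P_n\mathbf{x} - P_m\mathbf{x}\|_p^2. $$
In either case $\|P_n\mathbf{x} - P_m\mathbf{x}\|_p$ is controlled by the gap between $d_m$ and $d_n$ (through $d_m^p - d_n^p$ or $d_m^2 - d_n^2$), which tends to $0$ as $n,m \to \infty$ because $(d_n)$ converges. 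Thus $(P_n\mathbf{x})$ is Cauchy and converges in norm to some $\mathbf{y} \in \ell^p$.

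It then remains to identify $\mathbf{y}$ with $P_\infty\mathbf{x}$. First, $\mathbf{y} \in \mathscr{X}_\infty$: for each fixed $n$, the tail $(P_m\mathbf{x})_{m \geq n}$ lies entirely in the closed subspace $\mathscr{X}_n$, so its norm limit $\mathbf{y}$ does too; as this holds for every $n$, we get $\mathbf{y} \in \bigcap_n \mathscr{X}_n = \mathscr{X}_\infty$. Second, by continuity of the norm, $\|\mathbf{x} - \mathbf{y}\|_p = \lim_n d_n = d \leq d_\infty$. But $\mathbf{y} \in \mathscr{X}_\infty$ forces $\|\mathbf{x} - \mathbf{y}\|_p \geq \operatorname{dist}(\mathbf{x}, \mathscr{X}_\infty) = d_\infty$, so $\|\mathbf{x} - \mathbf{y}\|_p = d_\infty$. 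Since $\ell^p$ is uniformly convex, the nearest point in $\mathscr{X}_\infty$ is unique, whence $\mathbf{y} = P_\infty\mathbf{x}$, which completes the argument.

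I expect the main obstacle to be the second paragraph: the entire proof hinges on upgrading the scalar convergence of the distances $(d_n)$ to genuine norm convergence of the projections, and this is precisely where the geometry of $\ell^p$ enters through the \emph{lower} Pythagorean inequalities. In a general Banach space, lacking such a quantitative lower bound, the Cauchy-ness of $(d_n)$ alone would not force the projections to converge, so choosing the correct one of the four inequalities and treating the two ranges of $p$ separately is the crux of the matter.
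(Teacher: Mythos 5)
Your proof is correct and follows essentially the same route as the paper's: monotonicity and boundedness of the distances $d_n$, the lower Pythagorean inequality applied to the co-projection $\mathbf{x}-P_n\mathbf{x}$ and the difference of projections lying in $\mathscr{X}_n$ to obtain Cauchyness, then identification of the limit in $\mathscr{X}_\infty$ via closedness and uniqueness of nearest points. The only cosmetic difference is that you write out the two ranges of $p$ with their explicit constants, while the paper compresses both cases into a single inequality $\|\mathbf{x}+\mathbf{y}\|_p^r \geq \|\mathbf{x}\|_p^r + K\|\mathbf{y}\|_p^r$ with unspecified $K, r>0$.
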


\begin{proof}  The space $\ell^{p}$ is uniformly convex, and hence has unique nearest points to subspaces, and by Theorem \ref{pythagthm} satisfies, for some $K, r > 0$,
\[
    \| \mathbf{x} + \mathbf{y} \|_{p}^r \geq \|\mathbf{x}\|_{p}^r + K \|\mathbf{y}\|_{p}^r
\]
whenever $\mathbf{x} \perp_{p} \mathbf{y}$.    Let $\mathbf{x} \in \ell^{p}$.  By the definition of metric projection from Definition \ref{24387tr9euoifjdkw}, whenever $m<n$, we have
\begin{align}
    \| \mathbf{x} - P_m \mathbf{x} \|_{p} &=  \inf\{ \|\mathbf{x}-\mathbf{z}\|_{p} : \mathbf{z} \in \mathscr{X}_m\} \nonumber \\
       &\leq \inf\{ \|\mathbf{x}-\mathbf{z}\|_{p} : \mathbf{z} \in \mathscr{X}_n\} \nonumber \\
       &= \|\mathbf{x} - P_n \mathbf{x}\|_{p} \nonumber \\
       &\leq \|\mathbf{x} - P_{\infty} \mathbf{x}\|_{p}.\label{83483495345}
\end{align}
Thus, as a sequence indexed by $n$, the quantity $\|\mathbf{x} - P_n \mathbf{x}\|_{p}$ is monotone nondecreasing, and bounded above.  Accordingly, it converges.  

Next, for $m<n$, the vector $P_m \mathbf{x} - P_n \mathbf{x}$ lies in $\mathscr{X}_m$ (the larger space), and hence by \eqref{poisdpfo876w8rg}, the co-projection $\mathbf{x} - P_m \mathbf{x}$ is Birkhoff-James orthogonal to it.
Consequently, again by Theorem \ref{pythagthm},
\[
      \| \mathbf{x} - P_n \mathbf{x} \|_{p}^r \geq \|\mathbf{x} - P_m \mathbf{x} \|_{p}^r + K\|P_m \mathbf{x} - P_n \mathbf{x}\|_{p}^r.
\]
Since the difference $\| \mathbf{x} - P_n \mathbf{x} \|_{p}^r  - \|\mathbf{x} - P_m \mathbf{x} \|_{p}^r$, which is non-negative, can be made arbitrarily small by choosing $m$ sufficiently large, it follows that $(P_m \mathbf{x})_{m \geq 1}$ is a Cauchy sequence in norm, and converges to some vector $\mathbf{z}$.  It is clear that $\mathbf{z}$ belongs to $\mathscr{X}_{\infty}$ since $(P_{m} \mathbf{x})_{m \geq N} \subset \mathscr{X}_{N}$ and thus $\mathbf{z} \in \mathscr{X}_{N}$ for all $N$.  From \eqref{83483495345} we have 
\[
      \|\mathbf{x}-\mathbf{z}\|_{p} \leq \|\mathbf{x} - P_{\infty} \mathbf{x}\|_{p},
\]
and, from the definition of $P_{\infty} \mathbf{x}$ being a metric projection, we see that equality must hold. Hence, by uniqueness (note another use of the uniform convexity of $\ell^{p}$ here), it must be that $\mathbf{z} = P_{\infty}\mathbf{x}$. \end{proof}

Birkhoff-James orthogonality arises in a natural way when considering extremal problems in Banach spaces.  In this setting, the Pythagorean inequalities are enormously useful for estimation.  These geometric ideas have been applied in estimating zeros of analytic functions \cite{MR3686895} and exploring a sort of ``inner-outer factorization'' for $\ell^p_A$ \cite{CR2}.
They have also proved fruitful in the study of stochastic processes endowed with an $L^p$ structure.
These processes include $\alpha$-stable processes with $\alpha \in (1, 2]$, $L^p$-harmonizable processes, and strictly stationary $L^p$ processes.  The orthogonality condition is connected to associated prediction problems, Wold-type decompositions, and moving-average representations \cite{CHW2, CH1,CMP,CMP1, CR,CR2,MP}.


\section{$p$-Inner Functions}

From the theory of Hardy spaces, we say that $f \in H^2 \setminus \{0\}$ is an {\em inner function} if the radial boundary function for $f$ from \eqref{radial_BF} has constant modulus almost everywhere. In fact, most books on Hardy spaces make the normalizing assumption that this constant modulus is equal to one. Fourier analysis will show that $f \in H^2$ is inner precisely when 
$$\int_{0}^{\infty} |f(e^{i \theta})|^2 e^{-i n \theta} \frac{d \theta}{2 \pi} = 0, \quad n \in \N.$$ Since the inner product on $H^2$ can be given by 
$$\int_{0}^{2 \pi} f(e^{i \theta}) \overline{g(e^{i \theta})} \frac{d \theta}{2 \pi},$$ we see that $f \in H^2 \setminus \{0\}$ is an inner function when 
$$f \perp_{H^2} S^n f, \quad n  \in \N.$$ Furthermore \cite{Dur}, inner functions for $H^2$ take the form $f = c B S_{\mu}$, where $c \in \C \setminus \{0\}$, 
$$B(z) = z^m \prod_{k \geq 1} \frac{|w_k|}{w_k} \frac{w_k - z}{1 - \overline{w_k} z}$$ is a Blaschke product with zeros at $z = 0$, with multiplicity $m \in \N \cup \{0\}$, and $w_k \in \D \setminus \{0\}$ satisfying 
$$\sum_{k \geq 1} (1 - |w_k|) < \infty,$$ and 
$$S_{\mu}(z) = \exp\left(-\int_{0}^{2 \pi} \frac{e^{i \theta} + z}{e^{i \theta} - z} d \mu(\theta)\right),$$
where $\mu$ is a positive singular measure on $[0, 2 \pi]$.

The discussion above leads us to the following definition of inner for $\ell^{p}_{A}$, where we use Birkhoff-James orthogonality in place of Hilbert space orthogonality. 

\begin{Definition}\label{9847nvnsjdkh111}
   For $p \in (1, \infty)$ we say that $f \in \ell^p_{A} \setminus \{0\}$ is {\em $p$-inner} if $f \perp_{p} S^{n} f$ for all $n \in \N$. 
\end{Definition}

Observe from \eqref{BJp} that $f = \sum_{k \geq 0} a_k z^k \in \ell^{p}_{A}\setminus \{0\}$ is $p$-inner precisely when 
\begin{equation}\label{ooooodododo}
\sum_{k \geq 0} |a_{k + n}|^{p - 2} \overline{a_{k + n}} a_{k} = 0, \quad n \in \N.
\end{equation}
This shows that the function $f(z) = z^n$ is $p$-inner for all $n \in \N \cup \{0\}$. At first one might be at a loss to create other $p$-inner functions since the criteria in \eqref{ooooodododo} is difficult to apply and there is no product or integral representation like there was in the $p = 2$ case. However, the following analysis, using just the definition of Birkhoff-James orthogonality from \eqref{2837eiywufh[wpofjk}, provides many examples. 

\begin{Definition}
For $f \in \ell^{p}_{A}$, let $\widehat{f}$ be the metric projection of $f$ onto $[S f]$ from Definition \ref{24387tr9euoifjdkw}.
\end{Definition}

\begin{Proposition}\label{poiutorgorrrefdxxx}
For $f \in \ell^{p}_{A}$, let $J = f - \widehat{f}$ be the co-projection of $f$. Then $J$ is $p$-inner and any $p$-inner function arises in this way. 
\end{Proposition}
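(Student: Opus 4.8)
The plan is to prove the two assertions separately, leaning throughout on the fact recorded in \eqref{ppsduwebxzrweq5} that $\perp_p$ is linear (and, as I note below, continuous) in its second argument, which is what makes orthogonality to an entire subspace meaningful. For the forward direction, fix $f \in \ell^p_A \setminus \{0\}$ and set $J = f - \widehat f$. The first observation is that $[Sf] = \bigvee\{S^k f : k \geq 1\}$ is a closed, $S$-invariant subspace: $S$ carries the generating set $\{S^k f : k \geq 1\}$ into itself, and being a bounded (indeed isometric) operator it maps the closed linear span into itself. Since $\widehat f$ is the metric projection of $f$ onto this subspace, the co-projection obeys $J = f - \widehat f \perp_p [Sf]$ by \eqref{poisdpfo876w8rg}. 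It then suffices to verify that $S^n J \in [Sf]$ for every $n \in \N$: writing $S^n J = S^n f - S^n \widehat f$, the term $S^n f$ lies in $[Sf]$ because $n \geq 1$, while $S^n \widehat f$ lies in $[Sf]$ because $\widehat f \in [Sf]$ and $[Sf]$ is $S$-invariant. Hence $J \perp_p S^n J$ for all $n \in \N$, which is precisely Definition \ref{9847nvnsjdkh111}.

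To know that $J$ genuinely qualifies as $p$-inner I must also confirm $J \neq 0$. I would argue by the order of vanishing at the origin: if $f = z^m g$ with $g(0) \neq 0$, then each $S^k f = z^{m+k} g$, and hence every element of $[Sf]$, vanishes to order at least $m+1$ at the origin --- this uses that norm convergence forces locally uniform convergence, together with the Cauchy estimates underlying \eqref{oeiwjfdkv}, so that the value and first $m$ derivatives at $0$ pass to the limit. Since $f$ itself vanishes to order exactly $m$, we conclude $f \notin [Sf]$, so $J = f - \widehat f \neq 0$.

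For the converse, let $J$ be an arbitrary $p$-inner function and simply take $f = J$; the task is to show that its co-projection is $J$ itself, that is, $\widehat J = 0$. By hypothesis $J \perp_p S^n J$ for every $n \in \N$, and because $\perp_p$ is both linear and continuous in the second argument --- orthogonality to $\mathbf{y}$ is the vanishing of the bounded linear functional $(J^{\langle p-1 \rangle}, \cdot)$ on $\ell^{p'}$ by \eqref{ppsduwebxzrweq5}, whose kernel is a closed subspace --- this extends to $J \perp_p g$ for all $g$ in the closed span $[SJ]$. By the defining inequality \eqref{2837eiywufh[wpofjk} this reads $\|J - g\|_p \geq \|J\|_p$ for every $g \in [SJ]$, so $0$ is a nearest point of $[SJ]$ to $J$; uniqueness of the metric projection (from uniform convexity of $\ell^p$) forces $\widehat J = 0$, and therefore $J = f - \widehat f$.

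I expect the one genuinely load-bearing step to be the forward direction's passage $S^n J \in [Sf]$, which rests on combining the $S$-invariance of $[Sf]$ with the membership $\widehat f \in [Sf]$; this is exactly what lets $S^n J$ fall back inside the subspace to which the co-projection $J$ is already orthogonal. The nonvanishing of $J$ and the continuity upgrade in the converse are routine once \eqref{ppsduwebxzrweq5} is available.
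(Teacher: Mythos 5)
Your proof is correct and follows essentially the same route as the paper's: the forward direction rests on the membership $S^nJ = S^nf - S^n\widehat{f} \in [Sf]$ (via $S$-invariance and $\widehat{f}\in[Sf]$) combined with the orthogonality of the co-projection, and the converse uses linearity of $\perp_p$ in its second argument together with uniqueness of the metric projection to force $\widehat{J}=0$. Your additional check that $J\neq 0$ (by comparing orders of vanishing at the origin) is a worthwhile detail the paper leaves implicit, and your only slip is cosmetic: the functional $(J^{\langle p-1\rangle},\cdot)$ induced by $J^{\langle p-1\rangle}\in\ell^{p'}$ acts on $\ell^{p}$, not on $\ell^{p'}$.
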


\begin{proof}
First observe that $\widehat{f}$ is the unique vector in $[S f]$ which satisfies 
$$\|f - \widehat{f}\|_{p} = \inf\|f - Q f\|_{p}: Q \in \mathcal{P}, Q(0) = 0\},$$
where $\mathcal{P}$ denotes the set of analytic polynomials. Then for any $t \in \C$ and $n \in \N$ we see that since $ t S^{n} (f - \widehat{f}) \in [S f]$ we have 
\begin{align*}
\|J\|_{p} & = \|f - \widehat{f}\|_{p}\\
& \leq \|f - \widehat{f} - t S^{n} (f - \widehat{f})\|_{p}\\
& = \|J - t S^{n} J\|_{p}.
\end{align*}
From the definition of Birkhoff-James orthogonality from \eqref{2837eiywufh[wpofjk}, $J \perp_{p} S^{n} J$ for all $n \in \N$ and thus $J$ is $p$-inner. 

Conversely, suppose that $J$ is $p$-inner. Then $J \perp_{p} S^n J$ for all $n \in \N$. But since the criterion for Birkhoff-James orthogonality in the $\ell^{p}_{A}$ setting \eqref{BJp} is linear in the second argument, we see that 
$J \perp_{p} Q J$ for all $Q \in \mathcal{P}, Q(0) = 0$. This means that 
\begin{align*}
\|J - \widehat{J}\| & = \inf\|J + Q J\|_{p}: Q \in \mathcal{P}, Q(0) = 0\}\\
& \geq \|J\|_{p}.
\end{align*}
By the definition of $\widehat{J}$, we see that $\widehat{J} = 0$ and so the $p$-inner function $J = J - \widehat{J}$ is of the desired form. 
\end{proof}

\begin{Proposition}
For any $f \in \ell^{p}_{A}$, the co-projection $J$ has at least the same zeros of $f$ with at least the same multiplicities. 
\end{Proposition}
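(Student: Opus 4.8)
The plan is to exploit the fact that $\widehat{f}$ lies in $[Sf] = \bigvee\{S^k f : k \geq 1\}$, and that every element of this closed subspace inherits the zeros of $f$, multiplicities included. First I would record a concrete description of $[Sf]$: a function $g$ belongs to $[Sf]$ precisely when it is the $\ell^p_A$-norm limit of a sequence $q_n f$, where each $q_n$ is an analytic polynomial with $q_n(0) = 0$. This is immediate from the definition of $[Sf]$ as the closed linear span of the functions $S^k f = z^k f$ with $k \geq 1$, since a finite linear combination of these is exactly $q(z) f(z)$ for a polynomial $q$ vanishing at the origin.

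Next, suppose $w \in \D$ is a zero of $f$ of multiplicity $m$, so that $f^{(j)}(w) = 0$ for $0 \leq j \leq m - 1$. The key claim is that every $g \in [Sf]$ then satisfies $g^{(j)}(w) = 0$ for $0 \leq j \leq m-1$ as well. To see this, write $g = \lim_n q_n f$ in norm with $q_n(0) = 0$. By \eqref{4398742eqw}, norm convergence in $\ell^p_A$ forces convergence of all derivatives uniformly on compact subsets of $\D$; in particular $g^{(j)}(w) = \lim_n (q_n f)^{(j)}(w)$ for each fixed $j$. For each $n$, the product $q_n f$ has $f$ as a factor, so by the Leibniz rule $(q_n f)^{(j)}(w) = \sum_{i=0}^{j} \binom{j}{i} q_n^{(i)}(w) f^{(j-i)}(w)$, and every summand vanishes when $j \leq m-1$ because then $j - i \leq m-1$ and $f^{(j-i)}(w) = 0$. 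Thus $(q_n f)^{(j)}(w) = 0$ for all $n$ and all $j \leq m-1$, and passing to the limit gives $g^{(j)}(w) = 0$ for $0 \leq j \leq m-1$.

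Finally, I would apply this to $g = \widehat{f} \in [Sf]$, obtaining $\widehat{f}^{(j)}(w) = 0$ for $0 \leq j \leq m-1$. Since also $f^{(j)}(w) = 0$ for $0 \leq j \leq m-1$, linearity of differentiation gives $J^{(j)}(w) = f^{(j)}(w) - \widehat{f}^{(j)}(w) = 0$ for $0 \leq j \leq m-1$, so $J = f - \widehat{f}$ vanishes at $w$ to order at least $m$. Letting $w$ range over the zeros of $f$ yields the assertion.

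The main obstacle, such as it is, lies in the second paragraph: one must justify that membership in the \emph{closed} span $[Sf]$ — rather than merely the algebraic span — still forces vanishing of the prescribed derivatives at $w$. This is precisely where the estimate \eqref{4398742eqw}, promoting $\ell^p_A$-norm convergence to locally uniform convergence of all derivatives, is essential; without control of derivatives one could only conclude $g(w) = 0$, losing the multiplicity information. Everything else is elementary bookkeeping with the Leibniz rule.
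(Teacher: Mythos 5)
Your proof is correct, but it takes a genuinely different route from the paper's. You work with the metric projection $\widehat{f}$ itself: writing $\widehat{f}$ as a norm limit of $q_n f$ with $q_n$ polynomials vanishing at the origin, you use the derivative-evaluation bound \eqref{4398742eqw} to upgrade norm convergence to locally uniform convergence of every derivative, and then the Leibniz rule gives $(q_n f)^{(j)}(w)=0$ for $j \leq m-1$, hence $\widehat{f}^{(j)}(w)=0$ and $J^{(j)}(w)=f^{(j)}(w)-\widehat{f}^{(j)}(w)=0$. The paper instead works with $J$ directly: since $J=\lim_k h_k f$ in norm with $h_k(0)=1$, it applies the difference-quotient operator $Q_w$ of Proposition \ref{jjjjdjdjsslpl} a total of $m$ times, and the boundedness of $Q_w$ (which rests on Riesz--Thorin interpolation) gives $h_k f/(z-w)^m \to J/(z-w)^m$ in $\ell^p_A$, so that $J/(z-w)^m$ is analytic on $\D$; the paper also first normalizes $f(0)=1$, replacing $f$ by $f/z^m$ if necessary. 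Your route is more elementary, needing only the growth estimate \eqref{4398742eqw} rather than the interpolation-based division proposition, it isolates the slightly stronger fact that \emph{every} element of $[Sf]$ --- in particular $\widehat{f}$ --- inherits the zeros of $f$ with multiplicities, and it requires no special treatment of a zero at the origin. What the paper's argument buys is membership rather than mere analyticity: it shows $J/(z-w)^m$ actually lies in $\ell^p_A$, which is the form of the statement the paper reuses elsewhere (compare the division of $g$ by $f$ in the proof of Proposition \ref{lllklklklqlqlqlql}), and it keeps the exposition aligned with the ``divide out a zero'' machinery developed there.
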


\begin{proof}

Without loss of generality, assume $f(0)=1$. 
Indeed, if $f$ has a zero at the origin of multiplicity $m$, then we may carry out the following argument with $f/z^m$ in place of $f$.
 Let  $w$ be a zero of $f$ of multiplicity $n$.   We now argue that $w$ is a zero of $J$ with at least the same multiplicity.  
 
To see this, observe that by the definition of the co-projection, there must be polynomials $h_k$ such that $h_k(0) = 1$ and $h_k f$ converges to $J$ in norm.  This is due to the fact that  $\widehat{f}$ belongs to the subspace $S[f]$.  Since the difference quotient operator is a bounded operator on $\ell^p_A$ (Proposition \ref{jjjjdjdjsslpl}), it follows that 
$$\frac{h_k f}{(z - w)^n} \to \frac{J}{(z - w)^n}$$
in $\ell^p_A$ norm. In conclusion, $J/(z - w)^n$ is analytic on $\D$ and hence all zeros of $f$ must be zeros of $J$ to at least the same multiplicities.
\end{proof}

Let us work out the general formula for the $p$-inner function $J = f - \widehat{f}$ arising when $f$ is a polynomial with a finite number of zeros, all of which lie in $\D$.  Without serious loss of generality, we may assume that $f(0)=1$.

\begin{Example}

Let $p \in (1, \infty)$ and 
\[
  f(z) = 1 - \frac{z}{r}
\]
where $r \in \D \setminus \{0\}$.  Define $J = f-\widehat{f}$, so that $J \perp_p S^n f$ for all $n\in\mathbb{N}$. Since $\widehat{f} \in [S f]$ we have $\widehat{f}(0)  = 0$ and so $J(0)=1$. Moreover, if 
$$J(z) = \sum_{k \geq 0} J_k z^k = 1 + \sum_{k \geq 1} J_k z^k,$$
$$f(z) = \sum_{k \geq 0} f_k z^k = f_0 + f_1 z = 1 - \frac{1}{r} z,$$ we use \eqref{ooooodododo} to obtain
\[
    J_n^{\langle p-1 \rangle} f_0 + J_{n+1}^{\langle p-1 \rangle}f_1 = J_n^{\langle p-1 \rangle} \cdot 1 - J_{n+1}^{\langle p-1 \rangle}\cdot\frac{1}{r} = 0, \quad n \in \N.
\]
Solutions to this type of recurrence relation are well known;  for example
 \cite{Ela}. 
This has the obvious solution $J_n^{\langle p-1 \rangle} = Cr^n$.  By using the identity 
\begin{equation}\label{bbbcbcbcb112}
a^{\langle p-1 \rangle \langle p'-1 \rangle} =a
\end{equation} from Lemma \ref{776652bbbb}, we then have
\[
     J(z) = 1 + \sum_{k \geq 1} (C r^k)^{\langle p'-1 \rangle} z^k.
\]
The constant  $C$ is uniquely determined  by the requirement that $J(r) =0$.  This yields
\begin{align*}
     0 &= J(r) \\
        &= 1 + \sum_{k \geq 0} (C r^k)^{\langle p'-1 \rangle} r^k \\
        &= 1 + \sum_{k \geq 1} C^{\langle p'-1 \rangle} r^{p'k} \\
        &= 1 +  C^{\langle p'-1 \rangle}\frac{r^{p'}}{1-r^{p'}}.
\end{align*}
Thus 
       $$ C^{\langle p'-1 \rangle} = -\frac{1-r^{p'}}{r^{p'}}$$
       and so
\begin{align}
        J(z) &= 1 - \sum_{k \geq 1}  \frac{1-r^{p'}}{r^{p'}} r^{\langle p'-1 \rangle k} z^k  \nonumber \\
         &= 1 -  \frac{1-r^{p'}}{r^{p'}} \frac{r^{\langle p'-1 \rangle } z }{1 - r^{\langle p'-1 \rangle } z } \nonumber \\
         &= 1 -  \frac{1-r^{p'}}{r} \frac{z }{1 - r^{\langle p'-1 \rangle } z } \nonumber\\
         &= \frac{1 - r^{\langle p'-1 \rangle } z }{1 - r^{\langle p'-1 \rangle } z } - \frac{1-r^{p'}}{r} \frac{z }{1 - r^{\langle p'-1 \rangle } z } \nonumber\\
         &= \frac{1 - z/r}{1 - r^{\langle p'-1 \rangle } z }.\label{uuujujujuj}
\end{align}
This formula for the $p$-inner function associated with a linear polynomial was first derived in \cite{CR2} when exploring a possible ``inner-outer factorization'' for $\ell^p_A$.
\end{Example}

For a polynomial $f$ of higher degree, the following can be said about the $p$-inner function $f-\widehat{f}$.

\begin{Proposition}\label{uniqcoefthm}
Fix $p \in (1, \infty)$.  Suppose that $s_1$, $s_2$,\ldots, $s_d$ are distinct elements of $\mathbb{D} \setminus \{0\}$  and let $n_1$, $n_2$,\ldots, $n_d$ be positive integers.  Let $f$ be the polynomial
\[
     f(z) = \Big(1 - \frac{z}{s_1} \Big)^{n_1}\Big(1 - \frac{z}{s_2} \Big)^{n_2}\cdots\Big(1 - \frac{z}{s_d} \Big)^{n_d}.
\]
Then $J = f-\widehat{f} $ is of the form 
\begin{equation}\label{jgenpolyeq}
    J(z)  = 1 + \sum_{k=1}^{\infty} \big(\sum_{m=1}^{d} \sum_{j=0}^{n_m - 1} C_{j,m}k^j s_m^k  \big)^{\langle p'-1\rangle} z^k,
\end{equation}
and the constants $C_{j,m}$ are uniquely determined by the conditions that $J^{(m)}(s_k)=0$ for all $k$, $1 \leq k \leq d$ and all $m$, $0 \leq m \leq n_k-1$, where $J^{(m)}$ stands for the $m$th derivative of $J$.
\end{Proposition}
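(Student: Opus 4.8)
The plan is to convert the defining orthogonality of the co-projection into a finite-order linear recurrence for a conjugated version of the Taylor coefficients of $J$, solve that recurrence in closed form, and then use uniqueness of the metric projection to pin down the free constants. Two elementary facts drive everything. First, since $\widehat{f}\in[Sf]$ we have $\widehat{f}(0)=0$, hence $J(0)=f(0)=1$, which accounts for the leading ``$1$'' in \eqref{jgenpolyeq}. Second, because $J=f-\widehat{f}$ is the metric co-projection of $f$ onto $[Sf]$, relation \eqref{poisdpfo876w8rg} gives $J\perp_p g$ for every $g\in[Sf]$, and in particular $J\perp_p S^n f$ for all $n\geq 1$. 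This is the relation exploited in the preceding linear Example, and it is far more convenient than the bare $p$-inner condition \eqref{ooooodododo}, since the \emph{known} sequence $f$ now appears and the resulting equations are linear in $J^{\langle p-1\rangle}$.

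Next I would translate $J\perp_p S^n f$ into coefficients. Writing $f(z)=\sum_{j=0}^{D}f_j z^j$ with $D=n_1+\cdots+n_d$ and $f_0=1$, and using the linear criterion \eqref{ppsduwebxzrweq5} together with $(S^n f)_k=f_{k-n}$, the orthogonality becomes
\[
   \sum_{j=0}^{D} f_j\,J_{n+j}^{\langle p-1\rangle}=0,\qquad n\geq 1.
\]
Setting $b_k:=J_k^{\langle p-1\rangle}$, this is a constant-coefficient linear recurrence of order $D$ (note $f_D\neq 0$) whose characteristic polynomial is exactly $f$: the substitution $b_k=\lambda^k$ yields $\lambda^n f(\lambda)=0$. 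The roots of $f$ are the points $s_m$ with multiplicity $n_m$, so the standard theory of such recurrences gives
\[
   b_k=\sum_{m=1}^{d}\sum_{j=0}^{n_m-1} C_{j,m}\,k^j s_m^k,\qquad k\geq 1,
\]
for suitable constants $C_{j,m}$. Applying $\langle p'-1\rangle$ and invoking the inversion identity $(z^{\langle p-1\rangle})^{\langle p'-1\rangle}=z$ from Lemma \ref{776652bbbb} recovers $J_k=b_k^{\langle p'-1\rangle}$, which is precisely the asserted form \eqref{jgenpolyeq}.

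It then remains to determine the $D$ constants $C_{j,m}$ and to establish uniqueness. The earlier Proposition, that the co-projection inherits all the zeros of $f$ with their multiplicities, shows $J^{(\ell)}(s_k)=0$ for $1\leq k\leq d$ and $0\leq \ell\leq n_k-1$, which is exactly $D$ conditions for the $D$ unknowns. Abstract uniqueness of the constants I would extract from uniqueness of the metric projection: since $\ell^p$ is uniformly convex, $J$ is unique, so its coefficients $J_k$, the numbers $b_k=J_k^{\langle p-1\rangle}$, and finally the $C_{j,m}$ are all unique, the last step using that the sequences $(k^j s_m^k)_{k\geq 1}$ are linearly independent.

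The main obstacle is the uniqueness clause \emph{as stated}, namely that the zero conditions themselves determine the constants, because those conditions are nonlinear in the $C_{j,m}$ (the operation $\langle p'-1\rangle$ intervenes), so a Vandermonde-type determinant is not directly available. I would close this by showing that \emph{any} $\widetilde{J}$ of the form \eqref{jgenpolyeq} satisfying the zero conditions must in fact be the co-projection, and hence equals $J$. The key step is that $\widetilde{J}-f$ vanishes at each $s_m$ to order $\geq n_m$ and vanishes at $0$; repeatedly dividing by the linear factors $1-z/s_m$ and invoking Proposition \ref{jjjjdjdjsslpl} shows $\widetilde{J}-f=fQ$ with $Q\in\ell^p_A$ and $Q(0)=0$. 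Since multiplication by the polynomial $f$ is continuous on $\ell^p_A$, the partial sums of $fQ$ exhibit $\widetilde{J}-f$ as an $\ell^p_A$-limit of combinations of $S^k f$, so $\widetilde{J}\in f+[Sf]$. Combined with $\widetilde{J}\perp_p[Sf]$, which is automatic from the recurrence, this identifies $\widetilde{J}$ as the metric co-projection, forcing $\widetilde{J}=J$ and $\widetilde{C}_{j,m}=C_{j,m}$.
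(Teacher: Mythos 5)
Your proof is correct and follows essentially the same route as the paper's: translate $J \perp_p S^n f$ into the order-$N$ recurrence with characteristic polynomial $f$, solve it to obtain the form \eqref{jgenpolyeq}, show that any function of that form satisfying the zero conditions must coincide with the co-projection via the Birkhoff-James characterization of metric projections, and then get uniqueness of the $C_{j,m}$ from linear independence of the sequences $(k^j s_m^k)_{k \geq 1}$. The only (harmless) deviation is in proving $\widetilde{J} - f \in [Sf]$: you divide by $f$ inside $\ell^p_A$ using Proposition \ref{jjjjdjdjsslpl} together with boundedness of multiplication by the polynomial $f$, whereas the paper instead observes that any function of the form \eqref{jgenpolyeq} extends analytically past $\overline{\D}$ and approximates it by $\phi_n f$ with $\phi_n$ the partial sums of $K/f$.
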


\begin{Remark}
Before proceeding to the proof of this proposition, it is worth remarking that $J$ is analytic in a neighborhood of $\overline{\D}$. Indeed, each Taylor coefficient for $J$ is a polynomial in $s_{m}$ raised to the $\langle p' - 1 \rangle$ power.  In that polynomial, one sees $k$ times some $s_{m}^k$.  This decays geometrically, since the modulus of $s_m$ is less than $1$.  In fact the root of largest modulus dictates the radius of convergence of $J$.  From here, one sees that the radius of convergence of the Taylor series defining $J$ is $1/R^{(p'-1)}>1$, where $R = \max\{|s_1|, |s_2|, \ldots, |s_m|\}$.
\end{Remark}

\begin{proof}[Proof of Proposition \ref{uniqcoefthm}]  By the definition of $J$ we have $J \perp_p S^k f$ for all $k \in \N$.  With $N = n_1+\cdots+n_d$, this gives rise to a recurrence relation
\begin{equation}\label{recreleq}
    J_{k}^{\langle p-1\rangle} f_0 +  J_{k+1}^{\langle p-1\rangle} f_1 + \cdots +
     J_{k+N}^{\langle p-1\rangle} f_{N} = 0, \quad k \in \N,
\end{equation}
on the coefficients of $J$, which, again via  \cite{Ela}, has the solution (\ref{jgenpolyeq}).

Next, suppose that $K$ is a function of the form (\ref{jgenpolyeq}), and it satisfies the condition $K^{(m)}(s_k)=0$ for all $k$, $1 \leq k \leq d$ and all $m$, $0 \leq m \leq n_k-1$. In other words, all of the roots of $f$ are zeros of $K$ to at least the same multiplicities. Then $K$ is analytic in a neighborhood of $\overline{\mathbb{D}}$, and its zero set contains those of $f$, multiplicities taken into account.  It follows that $K/f$ is also analytic in a neighborhood of  $\overline{\mathbb{D}}$. Consequently, we have
\[
     \phi_n f \to K
\]
in $\ell^p_A$, where $\phi_n$ is the $n$th partial sum of $K/f$.  This shows that $K \in [ f ]$.  Since $K(0) = f(0) = 1$, we also see that $f - K$ belongs to $S[ f ]$.  Finally, by virtue of $K$ having the form (\ref{jgenpolyeq}), the function $K$ satisfies 
\[
      K \perp_p S[ f ].
\]  
This forces $f - K$ to be the metric projection $\widehat{f}$ of $f$ onto $S[ f ]$.  We conclude that  $K  = J$.

Finally, we turn to proving the uniqueness of the constants $C_{j, m}$ defining $J$. Indeed, suppose that $J$ has two representations of the form (\ref{jgenpolyeq}), namely,
\begin{align*}
     J(z)  &= 1 + \sum_{k=1}^{\infty} \big(\sum_{m=1}^{d} \sum_{j=0}^{n_m - 1} C_{j,m}k^j s_m^k  \big)^{\langle p'-1\rangle} z^k \\
       &=  1 + \sum_{k=1}^{\infty} \big(\sum_{m=1}^{d} \sum_{j=0}^{n_m - 1} \widetilde{C}_{j,m}k^j s_m^k  \big)^{\langle p'-1\rangle} z^k.
\end{align*}
It must be that the corresponding coefficients coincide, or
\[
     \big(\sum_{m=1}^{d} \sum_{j=0}^{n_m - 1} C_{j,m}k^j s_m^k  \big)^{\langle p'-1\rangle}
     = \big(\sum_{m=1}^{d} \sum_{j=0}^{n_m - 1} \widetilde{C}_{j,m}k^j s_m^k  \big)^{\langle p'-1\rangle}
\]
for all $k \in \N$.  By the taking $\langle p-1 \rangle$th ``powers'' of both sides, using \eqref{bbbcbcbcb112} and transposing, we find that
\[
     \sum_{m=1}^{d} \sum_{j=0}^{n_m - 1} [C_{j,m}-\widetilde{C}_{j,m}] k^j s_m^k   = 0
\]
for all $k$.  The only way this can happen is if $C_{j,m} = \widetilde{C}_{j,m}$ for all $1 \leq m \leq d$ and $1\leq j \leq n_m-1$, since the sequences $(k^j s_m^k)_{k\geq 1}$ constitute a complete, linearly independent set of solutions to the difference equation (\ref{recreleq}) underlying $J$ 
 \cite[Corollary 2.24]{Ela}. This shows that the constants $C_{j,m}$ are uniquely determined by the conditions $J^{(m)}(s_k)=0$ for all $k$, $1 \leq k \leq d$ and all $m$, $0 \leq m \leq n_k-1$.
\end{proof}


 \section{Extremal Problems}

Using ideas from \cite{MR1398090}, we relate $p$-inner functions to certain extremal problems. 

For a sequence $W \subset \mathbb{D} \setminus\{0\}$, recall from \eqref{RW} that $\mathscr{R}_{W}$ denotes the functions in $\ell^{p}_{A}$ that vanish on $W$ with the appropriate multiplicities.  From here, consider the extremal problem 
\begin{equation}\label{extremeeq1}
    \sup\left\{ |f(0)| :  \|f\|_p =1, f \in \mathscr{R}_{W} \right\}
\end{equation}
This is equivalent to finding
\begin{equation}\label{extremeeq2}
    \inf \left\{ \|g\|_p :  g(0)=1, \ g \in \mathscr{R}_{W}\right\}.
\end{equation}
To see this, let $(f_n)_{n \geq 1}$ be a sequence for which approximates the extremum in \eqref{extremeeq1}. Then let
\[
    g_n(z) = \frac{f_n(z)}{f_n(0)}
\]
and notice that $g_n(0) =1$ and $g_n \in \mathscr{R}_{W}$.  Thus the infimum is no greater than the limit of the sequence $1/|f_n(0)|$.  On the other hand, suppose that the infimum is approximated by some sequence $(h_n)_{n \geq 1}$.  Let 
\[
     k_n(z) = \frac{h_n(z)}{\|h_n\|_p}.
\]
Thus $\|k_n\|_p =1$, and $k_n \in \mathscr{R}_{W}$.  Accordingly, the supremum is at least as large as the limit of the sequence $1/\|h_n\|_p$.

We have shown that the infimum is no greater than one over the supremum, and the supremum is at least one over the infimum.  This forces each to be the reciprocal of the other, and thus they are indeed equivalent problems in this sense.

The solutions to each extremal problem is unique, since the infimum is a nearest point to a closed convex subset of a uniformly convex space.

Let $H$ be the (unique) solution to the infimum problem (\ref{extremeeq2}).  Then by definition,
\[
    \|H\|_p \leq \| H(z) + z\Psi(z) \|_p
\]
for any  
$\Psi \in \mathscr{R}_{W}$. Conversely, this condition characterizes $H$.  
In particular, via \eqref{24387tr9euoifjdkw}, we have $H \perp_p S^n H$ for every $n\in\mathbb{N}$.

This $H$ reminds us of the $p$-inner functions $J = f - \widehat{f}$ that we have already encountered.  Let us check that they coincide in special cases.

\begin{Proposition}\label{lllklklklqlqlqlql}
   Let $p \in (1, \infty)$ and suppose that $W = (w_1, w_2,\ldots,w_n)$ is a finite sequence from $\mathbb{D}\setminus \{0\}$.  Let
 \[
        f(z) := \Big(1 - \frac{z}{w_1}  \Big)\Big(1 - \frac{z}{w_2}  \Big)\cdots \Big(1 - \frac{z}{w_n}  \Big)
  \]
and $J = f - \widehat{f}$ in $\ell^p_A$, and let $H$ be the solution to the infimum problem from  (\ref{extremeeq2}).  Then $J=H$.
\end{Proposition}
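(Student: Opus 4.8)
The plan is to recast the infimum problem \eqref{extremeeq2} as the metric projection of $f$ onto $[S f]$, so that its solution is visibly $J = f - \widehat{f}$. The pivot is to identify the set of competitors in \eqref{extremeeq2} with an affine translate of the subspace $[S f]$.

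First I would record two elementary facts: $f(0) = 1$, and $f \in \mathscr{R}_W$ since the zeros of $f(z) = \prod_k (1 - z/w_k)$ are exactly the points of $W$ with the prescribed multiplicities. Thus $f$ is itself a competitor. Writing $M := \{h \in \mathscr{R}_W : h(0) = 0\}$, I would then show that the competitor set $\{g \in \mathscr{R}_W : g(0) = 1\}$ is precisely $\{f - u : u \in [S f]\}$. One inclusion is immediate: each $u \in [S f]$ vanishes at $0$ and on $W$, so $f - u \in \mathscr{R}_W$ with $(f - u)(0) = 1$. The reverse inclusion reduces to proving $M \subseteq [S f]$, because a competitor $g$ yields $u := f - g \in M$.

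The heart of the argument, and the step I expect to be the main obstacle, is this inclusion $M \subseteq [S f]$: I must show that every $h \in \ell^p_A$ with $h(0) = 0$ that vanishes on $W$ to the required orders is a norm-limit of polynomial multiples $q f$ with $q(0) = 0$. The idea is to divide out the zeros. Since $f$ vanishes exactly on $W$ and $h$ vanishes there to at least the same order, I would apply the difference-quotient operators $Q_{w_k}$ successively, each mapping $\ell^p_A$ into itself by Proposition \ref{jjjjdjdjsslpl}; at every stage the resulting function still vanishes at the remaining zeros to the needed order, so the iteration is legitimate and yields $\tilde{h} := h/f \in \ell^p_A$. As $h(0) = 0$ and $f(0) = 1$, we get $\tilde{h}(0) = 0$, so the Taylor partial sums $p_N$ of $\tilde{h}$ have no constant term, satisfy $p_N(0) = 0$, and converge to $\tilde{h}$ in $\ell^p_A$. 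Because multiplication by the fixed polynomial $f$ is a bounded operator on $\ell^p_A$ (a finite linear combination of powers of the isometry $S$), it follows that $p_N f \to \tilde{h} f = h$ in norm, exhibiting $h$ as a limit of elements of $\mathrm{span}\{S^k f : k \geq 1\}$; hence $h \in [S f]$.

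With the competitor set identified as $\{f - u : u \in [S f]\}$, the infimum in \eqref{extremeeq2} becomes $\inf_{u \in [S f]} \|f - u\|_p$, which is by definition the distance from $f$ to $[S f]$ and is attained uniquely at the metric projection $u = \widehat{f}$. The unique minimizer is therefore $f - \widehat{f} = J$. Since $H$ is the unique solution of \eqref{extremeeq2}, uniqueness of nearest points in the uniformly convex space $\ell^p_A$ forces $H = J$, as claimed.
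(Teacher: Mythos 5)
Your proposal is correct and follows essentially the same route as the paper: the key step in both is to use Proposition \ref{jjjjdjdjsslpl} repeatedly to divide out the zeros, approximate $g/f$ by its Taylor partial sums, and multiply back by $f$, thereby identifying the competitors in \eqref{extremeeq2} with polynomial multiples of $f$ (equivalently, with the affine translate $f - [Sf]$), after which uniqueness of nearest points in the uniformly convex space $\ell^p_A$ gives $H = J$. The only cosmetic difference is that the paper phrases this as a chain of equalities of infima rather than an explicit identification of the competitor set.
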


\begin{proof}  If $g\in \mathscr{R}_W$, then, by $n$ applications of Proposition \ref{jjjjdjdjsslpl}, the function $g/f$ also belongs to $\ell^p_A$.  Its power series converges in norm, hence 
there are polynomials $\phi_k$ such that $\phi_k \to g/f$ in $\ell^p_A$.  It follows that $\phi_k f \to g$.  This shows that the subspace of $\mathscr{R}_W$ is spanned by polynomial multiples of $f$.  With that, we may conclude
\begin{align*}
    \|H\|_p  &=  \inf\{\| g\|_p : g(0)=1, g|W=0\}\\
   &= \inf\{\|\phi f\|_p: \phi \in \mathcal{P}, \phi(0)=1\} \\
   & = \inf\{\|f + (\phi f - f)\|_{p}: \phi \in \mathcal{P}, \phi(0) = 1\}\\
   & = \inf\{\|f - Q f\|_{p}: Q \in \mathcal{P}, Q(0) = 0\}\\
  &= \|J\|_{p}. 
\end{align*}
This shows that $H = J$. 
\end{proof}

Let us examine another related extremal problem that will play an important role in the proof of our main zero set theorem (Theorem \ref{italiansubthm}). For a nonempty {\em finite} zero sequence $W \subset \D \setminus \{0\}$, recall that $\mathscr{R}_{W}$ denotes the functions in $\ell^{p}_{A}$ that vanish on $W$ with the appropriate multiplicities. Since $W$ is a finite and nonempty set, $\mathscr{R}_{W} \not = \{0\}$. 

Consider the extremal problem 
\begin{equation}\label{bbbbcbb1b1b1}
\inf\{\|1 + g\|_{p}: g \in \mathscr{R}_{W}\}.
\end{equation}

Observe that by the nearest point property for uniformly convex spaces, there is a unique $\Phi \in \ell^{p}_{A}$ for which $\Phi -1  \in \mathscr{R}_{W}$ and
\begin{equation}\label{ooo89we754}
\|\Phi\|_{p} = \inf\{\|1 + g\|_{p}: g \in \mathscr{R}_{W}\}.
\end{equation}

\begin{Proposition}\label{7734628234r}
The extremal problem in  \eqref{bbbbcbb1b1b1} has a unique solution $\Phi$ satisfying 
$$\Phi = 1 - \frac{H}{1 + (\|H\|_{p}^{p} - 1)^{p' - 1}},$$
where $H$ is the unique solution to the extremal problem in \eqref{extremeeq2}.
\end{Proposition}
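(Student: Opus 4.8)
The plan is to reduce the extremal characterization of $\Phi$ to a single orthogonality condition and then verify that the proposed formula satisfies it, concluding by uniqueness. Because $\Phi \in 1 + \mathscr{R}_W$ we have $1 + \mathscr{R}_W = \Phi + \mathscr{R}_W$, so the minimality of $\|\Phi\|_p$ over this affine set reads, by \eqref{2837eiywufh[wpofjk}, as $\|\Phi\|_p \leq \|\Phi + \beta m\|_p$ for all scalars $\beta$ and all $m \in \mathscr{R}_W$; that is, $\Phi \perp_p \mathscr{R}_W$. Thus $\Phi$ is the unique element satisfying $\Phi - 1 \in \mathscr{R}_W$ and $\Phi \perp_p \mathscr{R}_W$, with uniqueness supplied by the uniform convexity of $\ell^p_A$. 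Setting $\lambda := \big(1 + (\|H\|_p^p - 1)^{p'-1}\big)^{-1}$ and $\Phi_0 := 1 - \lambda H$, it therefore suffices to check both properties for $\Phi_0$. The first is immediate: since $H$ solves \eqref{extremeeq2} we have $H \in \mathscr{R}_W$, whence $\Phi_0 - 1 = -\lambda H \in \mathscr{R}_W$.

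The substance lies in proving $\Phi_0 \perp_p \mathscr{R}_W$, which by \eqref{ppsduwebxzrweq5} means $(\Phi_0^{\langle p-1\rangle}, g) = 0$ for every $g \in \mathscr{R}_W$. First I would compute $\Phi_0^{\langle p-1\rangle}$ from its coefficients. As $H(0) = 1$ and $\lambda \in (0,1)$ (here $H$ is nonconstant, so $\|H\|_p > 1$), the multiplicativity of $z \mapsto z^{\langle p-1\rangle}$ in Lemma \ref{776652bbbb}, together with $(-\lambda)^{\langle p-1\rangle} = -\lambda^{p-1}$, yields
\[
   \Phi_0^{\langle p-1\rangle} = (1-\lambda)^{p-1} - \lambda^{p-1}\big(H^{\langle p-1\rangle} - 1\big) = A - B\,H^{\langle p-1\rangle},
\]
with $A := (1-\lambda)^{p-1} + \lambda^{p-1}$ and $B := \lambda^{p-1}$. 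Next I would use the splitting $\mathscr{R}_W = \mathbb{C}\,f \oplus [Sf]$, which holds because $f(0) = 1$ and, upon dividing out the zero at the origin via Proposition \ref{jjjjdjdjsslpl}, $[Sf]$ equals $\{h \in \mathscr{R}_W : h(0) = 0\}$.

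Writing $g = g(0) f + \big(g - g(0) f\big)$ with $g - g(0) f \in [Sf]$, and recalling from Proposition \ref{lllklklklqlqlqlql} that $H = f - \widehat{f}$ with $\widehat{f} \in [Sf]$ and $H \perp_p [Sf]$, the pairing collapses: $(1, g) = g(0)$, while $(H^{\langle p-1\rangle}, g) = g(0)\,(H^{\langle p-1\rangle}, f) = g(0)\,(H^{\langle p-1\rangle}, H) = g(0)\,\|H\|_p^p$, using that $(H^{\langle p-1\rangle}, \cdot)$ annihilates $[Sf]$ and the identity $z^{\langle p-1\rangle} z = |z|^p$. Hence $(\Phi_0^{\langle p-1\rangle}, g) = g(0)\,\big(A - B\|H\|_p^p\big)$, and since $f(0) = 1$ this vanishes for all $g \in \mathscr{R}_W$ precisely when $A = B\|H\|_p^p$, i.e. $(1-\lambda)^{p-1} = \lambda^{p-1}(\|H\|_p^p - 1)$. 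The main obstacle is the nonlinearity of the $\langle p-1\rangle$ operation, and it is defeated by the exact choice of $\lambda$: with $M := \|H\|_p^p - 1 > 0$ one has $(1-\lambda)/\lambda = M^{p'-1}$, so the needed identity becomes $M^{(p'-1)(p-1)} = M$, which is nothing but the conjugate-exponent relation $(p-1)(p'-1) = 1$. This establishes $\Phi_0 \perp_p \mathscr{R}_W$, and uniqueness then gives $\Phi = \Phi_0$, as claimed.
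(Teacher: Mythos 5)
Your proof is correct, and it follows a genuinely different route from the paper's. You \emph{verify} the stated formula: you characterize the minimizer by the two conditions $\Phi - 1 \in \mathscr{R}_W$ and $\Phi \perp_p \mathscr{R}_W$, compute $\Phi_0^{\langle p-1\rangle} = A - B\,H^{\langle p-1\rangle}$ coefficientwise, and collapse the pairing $(\Phi_0^{\langle p-1\rangle}, g)$ through the splitting $\mathscr{R}_W = \C f \oplus [Sf]$ and the orthogonality $H = f - \widehat{f} \perp_p [Sf]$, so that the choice of $\lambda$ is forced by nothing more than the conjugate-exponent identity $(p-1)(p'-1)=1$. The paper instead \emph{derives} the formula from scratch: it takes the unknown extremal function $G$, shows by a rotation argument and one-variable perturbations that $G(0) \in (0,1)$ and that $G/G(0)$ solves \eqref{extremeeq2} (hence equals $H$ by Proposition \ref{lllklklklqlqlqlql}), and then computes the scalar $G(0)$ by minimizing $\|1 - xH\|_p^p = |1-x|^p + |x|^p\,(\|H\|_p^p - 1)$ over $x \in (0,1)$ via the derivative formula of Remark \ref{1515cvhgddf}. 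Your verification is shorter and explains the constant conceptually—it is exactly the scalar making the dual element $A - B\,H^{\langle p-1\rangle}$ annihilate $f$—but it presupposes the answer; the paper's argument discovers the formula, and its intermediate facts (that the extremal function is $1 - xH$ with $x \in (0,1)$, together with the displayed norm identity) are precisely what gets recycled in the proof of Theorem \ref{italiansubthm}. The one spot where you should add a line is the claim $[Sf] = \{h \in \mathscr{R}_W : h(0)=0\}$: besides Proposition \ref{jjjjdjdjsslpl} it uses $\mathscr{R}_W = [f]$, which is what the proof of Proposition \ref{lllklklklqlqlqlql} establishes via repeated division by the factors of $f$; this is a fillable detail, not a gap.
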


\begin{Remark}\label{1515cvhgddf}  The proof of  Proposition \ref{7734628234r} relies on the following derivative calculation. 
Let $p \in (1, \infty)$ and $t$ be a real variable. Suppose that $h(t) = u(t) + iv(t)$, where $u$ and $v$ are differentiable functions of a real variable, not both vanishing on some interval.   
Then
\begin{align*}
     \frac{d}{dt} |h(t)|^p  &=  \frac{d}{dt}[ h(t)^{p/2}  \bar{h}(t)^{p/2}] \\
     &= \frac{ph(t)^{p/2} \bar{h}(t)^{p/2} \bar{h}'(t)}{2\bar{h}(t)} +  \frac{p\bar{h}(t)^{p/2} {h}(t)^{p/2} {h}'(t)}{2{h}(t)}\\
     &= p |h(t)|^{p-2} \Re[h'(t)\bar{h}(t)]\\
     &=  p \Re \big[ h(t)^{\langle p-1 \rangle} h'(t) \big].
\end{align*}
\end{Remark}

\begin{proof}[Proof of Proposition \ref{7734628234r}] First note that the above infimum measures the nearest point of $\mathscr{R}_{W}$ to the constant function $1$ and hence by the uniform convexity of $\ell^{p}_{A}$ it has a {\em unique} solution, i.e., there is a $G \in \mathscr{R}_{W}$ for which 
$$\inf\{\|1 + g\|_{p}: g \in \mathscr{R}_{W}\} = \|1 - G\|_{p}.$$
Next, observe that since $\mathscr{R}_{W} \not = \{0\}$, we have $\|1 - G\|_{p} \in (0, 1)$. Furthermore, if 
$$G(z) = \sum_{j \geq 0} G_{j} z^j$$ we can see from the identity 
$$\|1 - G\|_{p}^{p} = |1 - G(0)|^p + \sum_{j \geq 1} |G_j|^p,$$ that $G(0) \not = 0$.

In addition, we know that $G(0) > 0$. Indeed, if 
$$G_1(z) = G(z) \frac{|G(0)|}{G(0)},$$ then 
\begin{align*}
\|1 - G_1\|_{p}^{p} & = |1 - |G(0)||^p + \sum_{j \geq 1} |G_j|^p \\
& \leq |1 - G(0)|^p + \sum_{j \geq 1} |G_j|^p\\
& = \|1 - G\|_{p}^{p}.
\end{align*}
Since $G$ is the unique solution to the extremal problem in \eqref{bbbbcbb1b1b1}, it must be the case that $G_1 = G$ and so $G(0) = |G(0)| > 0$.

We now argue that $G(0) \in (0, 1)$. For this, note that $$\min\left\{\Big\|1 - \frac{t}{G(0)} G\Big\|_{p}^{p}: t \in \R\right\}$$
occurs when $t = G(0)$. On the other hand, 
$$\Big\|1 - \frac{t}{G(0)} G\Big\|_{p}^{p} = |1 - t|^p + |t|^p \sum_{j \geq 1} |G_j/G(0)|^p.$$
Taking derivatives with respect to $t$ on both sides of the previous line and using the calculation from Remark \ref{1515cvhgddf}, we get  
$$\frac{d}{d t}  \Big\|1 - \frac{t}{G(0)} G\Big\|_{p}^{p}= -p (1 - t)^{\langle p - 1\rangle} + p t^{\langle p - 1\rangle}   \sum_{j \geq 1} |G_j/G(0)|^p$$ which vanishes when 
$$(1 - t)^{\langle p - 1\rangle} = t^{\langle p - 1\rangle}   \sum_{j \geq 1} |G_j/G(0)|^p,$$
or equivalently when 
$$|1 - t|^{p - 2} (1 - t) = |t|^{p - 2} t \sum_{j \geq 1} |G_j/G(0)|^p.$$
The above can only happen when $t$ and $(1 - t)$ have the same sign -- which forces $t \in (0, 1)$. However, by the above analysis, this minimum occurs when $t = G(0)$, and so $G(0) \in (0, 1)$. 

Finally, among all the function $g = \sum_{j \geq 0} g_j z^j \in \mathscr{R}_{W}$ such that $g(0) = G(0)$, $G$ itself has the smallest norm. Indeed, just consider the identity 
$$\|1 + g\|_{p}^{p} = |1 + G(0)|^p + \sum_{j \geq 1} |g_j|^p.$$
Thus the function $G/G(0)$ satisfies the extremal problem in \eqref{extremeeq2}. Hence by Proposition \ref{lllklklklqlqlqlql}, $G/G(0) = H = J$. From here we see that 
$$\inf\{\|1 + g\|_{p}: g \in \mathscr{R}_{W}\} = \|1 - G\|_{p} = \|1 - G(0) H\|_{p}.$$ To compute the exact value of $G(0)$, observe that $G(0)$ is the value of $x \in (0, 1)$ for which 
$$\frac{d}{d x} \|1 - x H\|_{p}^{p} = 0.$$ By a computation similar to the one used above to show that $G(0) \in (0, 1)$ we see that 
\begin{align*}
\frac{d}{d x} \|1 - x H\|_{p}^{p} & = \frac{d}{d x}\left(|1 - x|^{p} + |x|^p (\|H\|_{p}^{p} - 1)\right)\\
& = -p (1 - x)^{\langle p - 1\rangle} + p x^{\langle p - 1\rangle} (\|H\|_p^{p} - 1)\\
& = -p (1 - x)^{p - 1} + p x^{p - 1} (\|H\|_p^{p} - 1).
\end{align*}
Note the use of the fact that $x$ and $1 - x$ are positive and Lemma \ref{776652bbbb} in the last step. Since $H(0) = 1$, observe that $\|H\|_{p}^{p} - 1 > 0$. 
One can check that 
$$-p (1 - x)^{p - 1} + p x^{p - 1} (\|H\|_p^{p} - 1) = 0$$ precisely when 
$$x = \frac{1}{1 + (\|H\|_{p}^{p} - 1)^{p' - 1}}.$$
This proves the desired formula for $\Phi = 1 - G$.
\end{proof}

\section{Proof of Theorem \ref{italiansubthm}}

Suppose that $W =  (w_1, w_2,\ldots) \subset \D \setminus \{0\}$. We will think of $W$ as a possible zero set, repeated to reflect multiplicities, for a function in $\ell^p_A$.  Set $W_n = (w_1, w_2,\ldots,w_n)$ and define $\mathscr{R}_{W_n}$ and $\mathscr{R}_{W}$ as in the previous section. 

Let $\Phi_n$ and $\Phi$ be the unique functions in $\ell^{p}_{A}$ satisfying the extremal problems
\[
       \|\Phi_n\|_p = \inf\{ \|1 + g\|_p:  g \in \mathscr{R}_{W_n}\},
\]
and 
\[
     \|\Phi\|_p = \inf\{ \|1 + g\|_p:  g \in \mathscr{R}_{W}\}
\]
from \eqref{ooo89we754}.  By Proposition \ref{7734628234r}, we know that
\[
     \Phi_n(z) = 1 - \frac{J_n(z)}{1 + (\|J_n\|^p_p-1)^{\langle p'-1 \rangle}}
\]

Now apply Lemma \ref{propnestlwplem}, to 
$\mathscr{X} = \ell^p_A$,  $\mathscr{X}_n = \{g \in \ell^p_A : g \in \mathscr{R}_{W_n} = 0 \}$ and $x=1$. The conclusion is that $\Phi_n$ (the metric co-projection of $1$ to $\mathscr{X}_{n}$) converges in norm to $\Phi$ (the metric co-projection of $1$ to $\mathscr{X}_{\infty}$). Furthermore, $\|\Phi\|_{p} < 1$ if and only if $\mathscr{R}_{W} \not = \{0\}$, i.e., $W$ is a zero set for $\ell^{p}_{A}$. 

We know that $\|\Phi_n\|_p$ is a nondecreasing sequence with $\|\Phi_{n}\|_{p} < 1$.  Moreover, 
since $J_n(0) = 1$ we have $\|J_n\|^{p}_{p} - 1 > 0$.
From Lemma \ref{776652bbbb} we get 
$$(\|J_n\|_{p}^{p} - 1)^{\langle p' - 1\rangle} = (\|J_n\|_{p}^{p} - 1)^{p' - 1}.$$
From here we have 
\begin{align*}
\|\Phi_n\|_{p}^{p} & = \|1 - G_n(0) J_n\|_{p}^{p}\\
& = |1 - G_n(0)|^p + |G_n(0)|^p (\|J_{n}\|_{p}^{p} - 1)\\
& = \left|1 - \frac{1}{1 + (\|J_n\|_{p}^{p} - 1)^{\langle p' - 1 \rangle}}\right|^p \\
& \qquad + \frac{1}{(1 + (\|J_n\|_{p}^{p} - 1)^{\langle p' - 1 \rangle})^p}(\|J_{n}\|_{p}^{p} - 1)\\
& = \frac{ \|J_n\|^p_p -1 }{(1+[\|J_n\|^p_p -1]^{p'-1})^{p-1}}.
\end{align*}
Take $p'-1$ powers of both sides of the above equation to get 
$$\|\Phi_n\|_{p}^{p'} = \frac{( \|J_n\|^p_p -1)^{p' - 1}}{1 + ( \|J_n\|^p_p -1)^{p' - 1}}.$$
Now solve for $( \|J_n\|^p_p -1)^{p' - 1}$ and then for $\|J_n\|_{p}^{p}$ to obtain
\[
      \|J_n\|^p_p = 1 + \frac{\|\Phi_n\|_p}{(1-  \|\Phi_n\|_p^{p'-1})^{p-1}}.
\]

Since $\|\Phi_n\|_p$ is a nondecreasing sequence with $\|\Phi_{n}\|_{p} < 1$, we see that $\|J_n\|_p$ must be  monotone nondecreasing with $n$. Moreover, $W$ is a zero set for $\ell^{p}_{A}$ if and only if $\|J_n\|_{p}$ is bounded.

 If $W$ is a zero sequence for $\ell^{p}_{A}$, the limiting function $J$ for the sequence $J_n$ is $p$-inner. To see this, note that  $J \in \ell^{p}_{A} \setminus \{0\}$ since $J_n(0) = 1$ for all $n$. Moreover, $J_{n} \perp_{p} S^{N} J_{n}$ for all $N \in \N$. Using the bilinear pairing $(\cdot, \cdot)$ between $\ell^{p}_{A}$ and its dual space $\ell^{p'}_{A}$, along with \eqref{ppsduwebxzrweq5}, we can rewrite this orthogonality condition as 
$$(J_{n}^{\langle p - 1\rangle}, S^{N} J_{n}), \quad N \in \N.$$

We now claim that $J_n \to J$ in $\ell^p_A$ if and only if $J^{\langle p-1\rangle}_n \to J^{\langle p-1\rangle}$ in $\ell^{p'}_A$.  To see this, let 
$a_k$ and $a_k^{(n)}$ be the $k$th coefficients of $J$ and $J_n$, respectively.  The hypothesis that $J_n \rightarrow J$ implies that 
$a_k^{(n)} \rightarrow a_k$ for each $k$; that is, viewed as functions of the index $k$, the coefficient sequence $J_k$ converges to $J$ ``pointwise.''  Furthermore, the elementary bound
\begin{align*}
    \Big| [a_k^{(n)}]^{\langle p-1 \rangle} -  a_k^{\langle p-1 \rangle}  \Big|^{p'} &\leq 2^{p'} \left(  
      \Big| [a_k^{(n)}]^{\langle p-1 \rangle}\Big|^{p'} + \Big|  a_k^{\langle p-1 \rangle}  \Big|^{p'}  \right) \\
      &= 2^{p'} \Big( | a_k^{(n)}|^{(p-1)p'} + | a_k|^{(p-1)p'}  \Big) \\
      &= 2^{p'} \Big( | a_k^{(n)}|^{p} + | a_k|^{p}  \Big)
\end{align*}
holds for all $k$ and $n$.  The right hand side is summable in $k$ for each $n$. Thus, it furnishes a suitable dominating sequence of functions of $k$, with the sequence being indexed by $n$, for the Dominated Convergence Theorem to apply.  Counting measure in $k$ is the underlying measure.  The conclusion is that  $J^{\langle p-1\rangle}_n \to J^{\langle p-1\rangle}$ in $\ell^{p'}_A$.  The converse holds since the argument is symmetric in  $p$ and $p'$.

As consequence of the claim, we see that 
$$(J^{\langle p - 1\rangle}, S^{N} J), \quad N \in \N.$$ In other words $J \perp_{p} S^{N} J$ for all $N$. This completes our proof.

\section{Geometric Convergence to the Boundary} \label{exampsec}

In this section we use our main theorem to construct  new examples of zero sequences for $\ell^{p}_{A}$.  To do this, we will find a bound on the norms of the co-projection functions  $J_n$ for each finite sequence $w_1, w_2,\ldots, w_n$.  Then, by applying Theorem \ref{italiansubthm}, we obtain a limiting function with the prescribed zeros.  The bound is made possible by constructing a polynomial that is suitably close each of the associated co-projections.

Let $p \in (1, \infty)$ and $W = (w_1, w_2, w_3,\ldots) \subset \D \setminus \{0\}$.  As in Theorem \ref{italiansubthm}, define, for each positive integer $n$,
\[
    f_n(z) :=  \Big( 1 - \frac{z}{w_1}  \Big)\Big( 1 - \frac{z}{w_2}  \Big)\cdots \Big( 1 - \frac{z}{w_n}  \Big),
\]
and the $p$-inner function  $J_n = f_n - \widehat{f}_n$.

Fix $\epsilon>0$ suitably small and select  $r_1, r_2, r_3,\ldots$ such that each $r_k >1$ and 
\[
       \sum_{k \geq 1} \Big(1 - \frac{1}{r_k}\Big) = \frac{1}{p'} - \epsilon > 0.
\]
Necessarily, $r_k \to 1$.  
For any $r \in (1, \infty)$ and $w\in \mathbb{D}\setminus \{0\}$, define 
\[
      B_{w,r}(z) :=  \frac{1-z/w}{1 - w^{\langle r'-1 \rangle}z},
\]
and observe this is the $r$-inner function from \eqref{uuujujujuj}. Also note that when $r = 2$, this function is a constant multiple of a Blaschke factor.

For each $n \in \N$, the function
\[
       F_n := B_{w_1,r_1} B_{w_2,r_2} \cdots B_{w_n,r_n}
\]
belongs to $\ell^p_A$, satisfies $F_n(0) =1$, and $F(w_k)=0$ for all $k$, $1 \leq k \leq n$. 
%
%
 Thus, by the minimality property of the of co-projection $J_n$ (Definition \ref{24387tr9euoifjdkw}), we have 
\[
    \|J_n\|_p  \leq \|F_n\|_p
\]
for each $n$.  The goal now is to obtain an upper bound for $\|F_n\|_{p}$ that is independent of $n$. 

We now define  $p_1, p_2, p_3,\ldots$ by first defining 
\begin{equation}\label{useyoutheq}
     \frac{1}{p_1} + \frac{1}{r_1} = \frac{1}{p} + 1
\end{equation}
and then 
\begin{equation}\label{useyoutheq2}
      \frac{1}{p_k} + \frac{1}{r_k} = \frac{1}{p_{k-1}} + 1, \quad k \geq 2.
\end{equation}
Thus
\[
     \frac{1}{p_n} = \frac{1}{p} + \sum_{k=1}^{n}\Big( 1 - \frac{1}{r_k}  \Big),
\]
the sequence $(p_n)_{n \geq 1}$ is decreasing, and 
\[
     p^* := \lim_{n\rightarrow \infty} p_n  =  \left\{  \frac{1}{p} + \frac{1}{p'} -\epsilon  \right\}^{-1}  > 1.
\]

By virtue of the conditions (\ref{useyoutheq}) and (\ref{useyoutheq2}), we can apply Young's convolution inequality repeatedly to obtain
\begin{align}
     \| J_n \|_p  &\leq  \|F_n\|_p \nonumber \\
     &\leq \| B_{w_1,r_1} B_{w_2,r_2} \cdots B_{w_n,r_n} \|_p\nonumber\\
     &\leq \| B_{w_1,r_1} \|_{r_1}  \|B_{w_2,r_2} \cdots B_{w_n,r_n} \|_{p_1}\nonumber\\
     &\leq \| B_{w_1,r_1} \|_{r_1}  \|B_{w_2,r_2} \|_{r_2} \| B_{w_3,r_3}\cdots B_{w_n,r_n} \|_{p_2}\nonumber\\
     &\quad \cdots\nonumber\\
     &\leq \| B_{w_1,r_1} \|_{r_1}  \|B_{w_2,r_2} \|_{r_2} \| B_{w_3,r_3}\|_{r_3} \cdots \|B_{w_{n-1},r_{n-1}} \|_{r_{n-1}}\|B_{w_{n},r_{n}} \|_{p_{n-1}}
               \nonumber\\
     &\leq \| B_{w_1,r_1} \|_{r_1}  \|B_{w_2,r_2} \|_{r_2} \| B_{w_3,r_3}\|_{r_3} \cdots \|B_{w_{n-1},r_{n-1}} \|_{r_{n-1}}\|B_{w_{n},r_{n}} \|_{p^*}.
     \label{laststepinyoungeq}
\end{align}
We will be done if we can find a uniform bound for the final factor, $\|B_{w_{n},r_{n}} \|_{p^*}$, as well as the product of the remaining factors.

By direction calculation, similar to \cite[Lemma 3.2]{MR3686895}, we have
\[
     \|B_{w,r} \|_{t}^t  =  1 + \frac{(1 - |w|^{r'})^t}{|w|^t (1 - |w|^{(r' -1)t})}.
\]
and 
\[
     \|B_{w,r} \|_{r}^r  =  1 + \frac{(1 - |w|^{r'})^{r-1}}{|w|^r }.
\]
Let us prove the second identity since the proof of the first identity is similar. Indeed, 
\begin{align*}
  \| B_{w, r} \|^{r}_{r}  &=  1 + \sum_{j \geq 1} \Big|  {w^{\langle r'-1 \rangle (j -1)}} \Big(w^{\langle r'-1 \rangle} - \frac{1}{w}\Big) \Big|^r  \\
&=  1 + \sum_{j \geq 1} \Big|  {w^{\langle r'-1 \rangle (j -1)}} \Big(w^{\langle r'-1 \rangle} - \frac{1}{w}\Big) \Big|^r  \\
&=  1 + \Big|  w^{\langle r'-1 \rangle} - \frac{1}{w}\Big|^r \sum_{j=1}^{\infty} |w|^{r(r'-1)}   \\
&=  1 + \frac{(1-|w|^{r'})^{p}}{|w|^r}  \frac{1}{1-|w|^{r'}} \\
&=  1 + \frac{(1-|w|^{r'})^{r-1}}{|w|^p}.
\end{align*}

When $t \in (1, \infty)$, $r \in (1, \infty)$, and $|w|$ increases to 1, the quantity $\|B_{w,r} \|_{t}^t$ tends to the value 1, since  
\begin{equation}
      \frac{(1 - |w|^{r'})^t}{(1 - |w|^{(r' -1)t})} \sim  \frac{t(1-|w|^{r'})^{t-1}r'|w|^{r'-1}}{t(r'-1)|w|^{t(r'-1)-1}} \to 0. 
\end{equation}

Next, we recall that $p^* >1$, and $r_k$ decreases to 1 as $k$ tends to infinity.  Thus $r'_k$ increases to infinity, and consequently,
for $k$ sufficiently large, we have
\begin{align*}
   p^* &\geq r'_k /(r'_k -1)\\
   (r'_k -1)p^* &\geq r'_k \\
   1-|w_k|^{(r'_k -1)p^*} &\geq 1- |w_k|^{r'_k} \\
   &\geq (1- |w_k|^{r'_k})^{p^*}.
\end{align*}
This implies that for sufficiently large $k$, the last factor $\|B_{w_{n},r_{n}} \|_{p^*}$ of (\ref{laststepinyoungeq}) is no greater than $2$.
Finally, we see that (\ref{laststepinyoungeq}) is uniformly bounded as $n$ increases provided that the roots $w_1, w_2, w_3,\ldots$ satisfy
\begin{equation}\label{77wejsdhf}
     \sum_{k \geq 1}  (1 - |w_k|^{r'_{k}})^{r_k-1} < \infty.
\end{equation}

This proves Theorem \ref{blaslikeex}, which asserts that a sequence
  $W = (w_j)_{j \geq 1}$ satisfying  \eqref{77wejsdhf} is a zero set for $\ell^p_A$.

It is obvious that any $W$ satisfying (\ref{77wejsdhf}) must be a Blaschke sequence.  However, by replacing each factor
$B_{w_k,r_k}(z)$ in the above construction with $B_{w_k,r_k}(z^k)$, we obtain a zero set $\widetilde{W}$ consisting of the complex $k$th roots of $w_k$ for each $k$.  Because
\[
     \|B_{w_k,r_k}(z)\|_{r_k} = \|B_{w_k,r_k}(z^k)\|_{r_k}
\] 
holds, the same estimate for (\ref{laststepinyoungeq}) applies, telling us that $\widetilde{W}$ a zero set for $\ell^p_A$.   It is clear that $\widetilde{W}$ cannot be the union of finitely many sequences tending toward the boundary at exponential rates.  
Furthermore, the zero set $\widetilde{W}$ accumulates everywhere on the boundary of $\mathbb{D}$.  Therefore, we have produced an example going beyond those known from the zero sets created in \cite{MR0148874,MR947146}, via Blaschke products having certain desired properties on their Taylor coefficients.

\section{Slower Than Geometric Convergence to the Boundary}

First, let us observe that if $r_k$ is given by
\[
     r_k := e^{-1/k},  \, \ k \in \mathbb{N},
\]
then $r_k$ fails to converge to 1 {\em at an exponential rate} as $k$ increases to infinity. 
In fact,
\[
     \lim_{k\rightarrow\infty} \frac{1 - r_{k+1}}{1-r_k} =
     \lim_{k\rightarrow\infty}\frac{1 - e^{-1/(k+1)}}{1 - e^{-1/k}} 
     = \lim_{k\rightarrow\infty}\Big(  \frac{k}{k+1} \Big)^2 e^{1/k(k+1)}
     = 1.
\]
The conclusion remains valid if $e$ is replaced by some other base exceeding one, or if the $1/k$ in the exponent is replaced by $1/k^d$ for any positive integer $d$. Furthermore, it holds all the more if, instead of being constant, the base increases with $k$.  This will come into play at the end of our construction.

Our overall strategy in our construction is to identify an increasing sequence of nested finite zero sets and define $f_k$ to to be the polynomial with precisely the zeros of the $k$th set.  We will obtain a corresponding sequence of polynomials $F_k$ that carry these zero sets and other zeros as well.  Each $F_k$ will furnish a norm estimate of the associated $p$-inner co-projection function $J_k = f_{k} - \widehat{f_k}$.  By showing that these $F_k$ are uniformly bounded in norm, and using the extremal property of $J_k$, i.e., $\|J_k\|_{p} \leq \|F_{k}\|_{p}$, we may conclude that the $J_k$ are also bounded, and hence the constructed zero set is contained in that of a nontrivial function in $\ell^p_A$.  

Let $p \in (1, \infty)$ and for each $k \in \N$ consider polynomials $F_k$ given by
\begin{align}
    F_k(z) &:=  \left(  1 - \frac{z}{r_1}  \right) \left( 1 - \frac{1}{2} \Big[ \frac{z^2}{r_2^2} + \frac{z^4}{r_2^4}  \Big]  \right)
         \left( 1 - \frac{1}{4} \Big[ \frac{z^8}{r_3^8} + \frac{z^{16}}{r_3^{16}} +\frac{z^{32}}{r_3^{32}} + \frac{z^{64}}{r_3^{64}}  \Big]  \right)\nonumber\\
         &\qquad \times \cdots \times  
          \left( 1 - \frac{1}{2^{k-1}} \Big[ \frac{z^{N_k}}{r_k^{N_k}} + \cdots + \frac{z^{N_k^2}}{r_k^{N_k^2}}  \Big]  \right)\label{formfsubkeq},
\end{align}
where $N_k = 2^{(2^{k-1}-1)}$, and $r_1, r_2, \ldots, r_k$ belong to $(0,1)$ yet to be determined.
Let us make some observations about these $F_k$.  Each factor is a polynomial with a number of roots; among them are a specific set of roots that we will call the {\em targeted roots}.  The targeted roots are determined in the following way.  For each  $j = 1, 2, 3,\ldots, k$, fix some modulus $r_j$ with $r_j \in (0, 1)$ and notice that the $j$th factor vanishes, as does $F_k$ itself, consequently, at the points
\[
     r_j,\  r_j e^{2\pi i/N_j},\ r_j e^{2\cdot 2\pi i/N_j}, \ldots,\ r_j e^{(N_j -1)\cdot 2\pi i/N_j}.
\]
For each $j$, the targeted roots are these $N_j$ elements of $\mathbb{D}$, each with modulus $r_j$, uniformly distributed in argument around the disk.  The $j$th factor thus contributes a huge number of targeted roots, and this serves to slow down the rate of convergence to the boundary of the constructed zero set.  Any roots other than the targeted roots will have no bearing whatsoever on the argument. Define $f_{k}$ to be the polynomial whose zeros are precisely the targeted roots of $F_k$. 

Next, consider the effect of multiplying out the factors $F_k$, with intention of calculating its norm in $\ell^p_A$.  The fact that all occurrences of $z$ in the defining formula for $F_k$ are all powers of $2$ implies that in the expansion each $z^m$ can occur only once for each $m$ (namely, the combination of factors corresponding to the binary representation of $m$).  Put differently, when you multiply out the defining formula for $F_k$, there is no need to collect like terms -- each power of $z$ can only arise in at most one way.  

A typical term in this expansion looks like
\[
    \pm \frac{1}{2^{j_1-1} 2^{j_2-1} \cdots 2^{j_s-1}} \frac{1}{r_{j_1}^{m_1} r_{j_2}^{m_2} \cdots r_{j_s}^{m_s}} z^m,
\]
where $m_1$, $m_2$,\ldots,$m_s$ are certain powers of 2 adding up to $m$.  Its absolute value bounded above crudely by
\[
     \frac{1}{2^{j_1-1} 2^{j_2-1} \cdots 2^{j_s-1}} \frac{1}{r_{j_1}^{N_{j_1}^2} r_{j_2}^{N_{j_2}^2} \cdots r_{j_s}^{N_{j_s}^2}},
\]
and notice that there are $2^{j_1-1} 2^{j_2-1} \cdots 2^{j_s-1}$ terms with the same bound.  It follows that $\|F_k\|^p_p$ is bounded above by
a sum of terms of the form
\[
      \frac{1}{(2^{j_1-1} 2^{j_2-1} \cdots 2^{j_s-1})^{p-1}} \frac{1}{r_{j_1}^{N_{j_1}^2 p} r_{j_2}^{N_{j_2}^2 p} \cdots r_{j_s}^{N_{j_s}^2 p}}.
\] 
where the parameters $j_1$, $j_2$,\ldots,$j_s$ are now distinct.

And now working backwards from this sum, we obtain the bound
\begin{align*}
    \|F_k\|^p_p & \leq \left(1 + \frac{1}{r_1^p}  \right)\left(1 + \frac{1}{2^{p-1}r_2^{4p}} \right)\left(1 + \frac{1}{2^{2(p-1)}r_3^{64p}} \right)\\
       & \qquad \times \cdots \times \left(1 + \frac{1}{2^{(k-1)(p-1)}r_k^{N_k^2 p}} \right).
\end{align*}

The infinite product converges if and only if the following sum converges:
\[
      \sum_{k=1}^{\infty} \frac{1}{2^{(k-1)(p-1)}r_k^{N_k^2 p}}.
\]

Our next task will be to identify values of $r_k$ that are sufficient for this sum to converge. Plainly, this happens if there is some $a>1$ such that
\[
       \frac{1}{2^{(k-1)(p-1)}r_k^{N_k^2 p}}  = \frac{1}{k (\log k)^{a}}
\]
for all $k$.  This can be rewritten as
\[
       r_k = \left( \frac{k (\log k)^{a}}{2^{(k-1)(p-1)} }  \right)^{1/2^{(2^k -2)p}}.
\]

Now remember that $r_k$ is not the modulus of the $k$th of the zeros of $F_k$, but rather, it is the modulus of a large collection of zeros.
Indeed, if $(\rho_n)_{n\geq 1}$ is an enumeration of the set of targeted zeros in order of non-decreasing modulus, then $|\rho_n| = r_k$
whenever
\[
 N_1 + N_2 + \cdots + N_{k-1} < n \leq N_1 + N_2 + \cdots + N_{k-1} + N_k.
\]

Let us estimate the rate at which the targeted roots of $F_k$ tend toward the boundary. 
Since $N_{k-1} \leq N_1 + N_2 + \cdots + N_{k-1}$ and $N_1 + N_2 + \cdots + N_{k-1} + N_k \leq k N_k$ for all $k$, we have the bounds
$ N_{k-1}  < n  \leq k N_k$.
With $n$ and $k$ related in this manner, it follows that
\begin{align*}
N_{k-1} &< n \leq k N_k \\
\log_2(1 + \log_2 n) \leq k &< 2 + \log_2(1 + \log_2 n).
\end{align*}
Consequently 
\begin{align*}
\rho_n & \geq \left(\frac{ [\log_2(1 + \log_2 n)] \{\log_2(1 + \log_2 n)\}^a}{\{2[1+\log_2 n]\}^{p-1} } \right)^{(2/n)^p} \\
\rho_n & \leq \left(\frac{ [2 + \log_2(1 + \log_2 n)] \{2 + \log_2(1 + \log_2 n)\}^a}{\{(1/2)[1+\log_2 n]\}^{p-1} } \right)^{1/(4n^4)^p}.
\end{align*}
By the observation made at the beginning of this section, the zero sequence $(\rho_n)_{n\geq 1}$ fails to approach the boundary at a geometric rate.

This completes the construction.  Again, this produces an example of a zero set that fails to satisfy the Newman condition; that is, it cannot be expressed as the union of sequences tending toward the boundary of $\mathbb{D}$ at an exponential rate.

\section{A Non-Blaschke Zero Set for $p>2$}

Vinogradov \cite{Vinogradov} proved  that when $p > 2$ the zero sets for $\ell^{p}_{A}$ need not be Blaschke sequences. Here is a new proof of this using $p$-inner functions. For each $k \in \N$, define the polynomial $F_k$ by
\begin{align*}
   F_k(z) &:= \left( 1 - \frac{z}{r_1} \right) \left( 1 - \frac{1}{2}\Big[\frac{z^{2!}}{r_2^{2!}} + \frac{z^{2\cdot 2!}}{r_2^{2\cdot 2!}} \Big] \right)
                \left( 1 - \frac{1}{3}\Big[\frac{z^{3!}}{r_3^{3!}} + \frac{z^{2\cdot 3!}}{r_3^{2\cdot 3!}} + \frac{z^{3\cdot 3!}}{r_3^{3\cdot 3!}}\Big] \right)\\
                &\qquad \times\cdots\times
         \left( 1 - \frac{1}{k}\Big[\frac{z^{k!}}{r_k^{k!}} + \frac{z^{2\cdot k!}}{r_k^{2\cdot k!}} + \frac{z^{3\cdot k!}}{r_k^{3\cdot k!}}
               +\cdots+   \frac{z^{k\cdot k!}}{r_k^{k\cdot k!}}  \Big] \right),
\end{align*}
where $r_1, r_2, r_3,\ldots$ are moduli in $(0,1)$.  We observe that among the roots of this polynomial are certain {\it targeted roots}, consisting of
$$r_j, r_j e^{2\pi i/j!}, r_j e^{2\pi i \cdot 2/j!}, \ldots, r_j e^{2\pi i \cdot (j!-1)/j!}, \quad 1 \leq j \leq k.$$  Observe how the $j$th factor contributes $j!$ roots. Again, as with the example in the previous section, let $f_k$ be the polynomial whose roots are {\em precisely} the targeted roots of $F_k$. 

It is easily proved by induction that
\[
      1 + 2\cdot 2! + 3\cdot 3! + \cdots + k\cdot k!  + 1 = (k+1)!
\]
for each $k$.  
As a consequence, when the formula for $F_k$ is multiplied out, each resulting power of $z$ can only arise from one combination of factors, and there is no need to collect like terms.  (It helps to notice that if we take the largest power of $z$ from each of the first $(k-1)$ factors and multiply them, then the resulting power is, by design, one less than the smallest power of the $k$th factor.)

This greatly simplifies the estimate of $\|F_k\|_p$.  Indeed, a typical term in the expansion looks like
\[ \pm 
 \frac{1}{k_1 k_2 \cdots k_m} \frac{1}{r_{k_1}^{n_1 k_1 !}r_{k_2}^{n_2 k_2 !}\cdots r_{k_m}^{n_m k_m !}}z^{(n_1 k_1!+ n_2 k_2! + n_m k_m!)}
\]
where $k_1$, $k_2$,\ldots, $k_m$ are distinct indices between $1$ and $k$, and for $1\leq l \leq m$, we have $1 \leq n_l \leq k_l$.  The coefficient can be bounded absolutely above by
\[
      \frac{1}{k_1 k_2 \cdots k_m} \frac{1}{r_{k_1}^{k_1 k_1 !}r_{k_2}^{k_2 k_2 !}\cdots r_{k_m}^{k_m k_m !}}
\]
where we have simply replaced powers in the denominator by something possibly larger, increasing the fraction overall.  There are exactly
$k_1 k_2 \cdots k_m$ terms in the expansion with the same bound. Accordingly we obtain the estimate

\[
    \|F_k\|_p^p  \leq 1 + \sum \frac{1}{(k_1 k_2 \cdots k_m)^{p-1}} \frac{1}{r_{k_1}^{pk_1 k_1 !}r_{k_2}^{pk_2 k_2 !}\cdots r_{k_m}^{pk_m k_m !}}
\]

where the sum ranges over all selections $k_1$, $k_2$,\ldots, $k_m$ of distinct indices between $1$ and $k$.   The right hand side can be expressed as

\[
     \left( 1 + \frac{1}{1^{p-1} r_1^{p}}  \right)\left( 1 + \frac{1}{2^{p-1} r_2^{2p\cdot 2!}}  \right)\cdots \left( 1 + \frac{1}{k^{p-1} r_k^{kp\cdot k!}}  \right)
\]
which converges as $k$ tends to infinity precisely when
\[
      \sum_{k=1}^{\infty}  \frac{1}{k^{p-1} r_k^{kp\cdot k!}} \ < \ \infty
\]
Convergence is assured if we take, for example,
$$r_k  =  \Big(\frac{1}{k^{p - 2 -\alpha}}\Big)^{1/kp\cdot k!},$$
where $\alpha > 0$.  This can only make sense if $p >2$, and we choose $0 < \alpha < p-2$.   The defined sequence of polynomials therefore satisfies
\[
      \sup_{k\geq 1} \| F_k \|_p  < \infty.
\]
Since each $F_k(0) = 1$, we have $ \| J_k \|_p \leq \|F_k \|_p$ for all $k$, where $J_k$ is the $p$-inner function corresponding to $f_k$, the polynomial with precisely the targeted roots up to the $k$th index.  Now invoke  Theorem \ref{italiansubthm} to see that the set $W$ of all targeted zeros is the zero set of a nontrivial function from $\ell^p_A$.

In this case, the corresponding Blaschke test (recall \eqref{024ouriehtrjgeqw}) becomes 
\begin{align*}
      \sum_{k=1}^{\infty}  k! ( 1 - r_k) &= \sum_{k=1}^{\infty}  k! ( 1 - \exp\log r_k) \\
       &=  \sum_{k=1}^{\infty}  k! \left( 1 - \exp \left\{ \Big[\frac{1}{kp\cdot k!
      }\Big ] (\log \{1/k\})(p - 2 -\alpha) \right\} \right) \\
     & \geq  \  \sum_{k=1}^{\infty} k! \frac{1}{kp\cdot k!}( \log k)(p-2-\alpha)\\
     & \qquad -   
     \sum_{k=1}^{\infty} k! \frac{1}{2(kp\cdot k!)^2}( \log k)^2(p-2-\alpha)^2\\
     &= \sum_{k=1}^{\infty} \frac{\log k}{kp}(p-2-\alpha) -   
     \sum_{k=1}^{\infty}  \frac{1}{2(kp)^2k!}( \log k)^2(p-2-\alpha)^2
\end{align*}

(recalling that there are $k!$ roots with modulus $r_k$)
which diverges to infinity.   Here we used 

\[
      1 - e^{-x} = 1 - \left(1 - x + \frac{x^2}{2!} - \frac{x^3}{3!} + \cdots\right)  \geq 1 - \left(1-x+ \frac{x^2}{2}\right) = x - \frac{x^2}{2}.
\]
for sufficiently small $x$.

\bibliographystyle{plain}

\bibliography{references4}

%
%
%
%
%
\end{document}